\theoremstyle{plain}
\newtheorem{theorem}{Theorem}
\newtheorem{lemma}[theorem]{Lemma}
\newtheorem{prop}[theorem]{Proposition}
\newtheorem{remark}[theorem]{Remark}
\newtheorem*{invariant*}{Invariant}
\newcommand\blfootnote[1]{%
  \begingroup
  \renewcommand\thefootnote{}\footnote{#1}%
  \addtocounter{footnote}{-1}%
  \endgroup
}
\date{}
\title{Metric dimension of maximal outerplanar graphs\thanks{An extended abstract of this work has appeared at the 17th Spanish Meeting on Computational Geometry (EGC 2017).}}
\author{M. Claverol\thanks{{\tt merce.claverol@upc.edu}. Universitat Polit\`{e}cnica de Catalunya, Spain.}
\and A. Garc\'\i a\thanks{{\tt olaverri@unizar.es}. IUMA, Universidad de Zaragoza, Spain.}
\and G. Hern\'andez\thanks{{\tt gregorio@fi.upm.es}. Universidad Polit\'ecnica de Madrid, Spain.}
\and C. Hernando \thanks{{\tt carmen.hernando@upc.edu}. Universitat Polit\`{e}cnica de Catalunya, Spain.}
\and M. Maureso\thanks{{\tt montserrat.maureso@upc.edu}. Universitat Polit\`{e}cnica de Catalunya, Spain.}
\and M. Mora \thanks{{\tt merce.mora@upc.edu}. Universitat Polit\`{e}cnica de Catalunya, Spain.}
\and J. Tejel\thanks{{\tt jtejel@unizar.es}. IUMA, Universidad de Zaragoza, Spain.}}
\begin{document}

\maketitle

\blfootnote{\begin{minipage}[l]{0.3\textwidth} \includegraphics[trim=10cm 6cm 10cm 5cm,clip,scale=0.15]{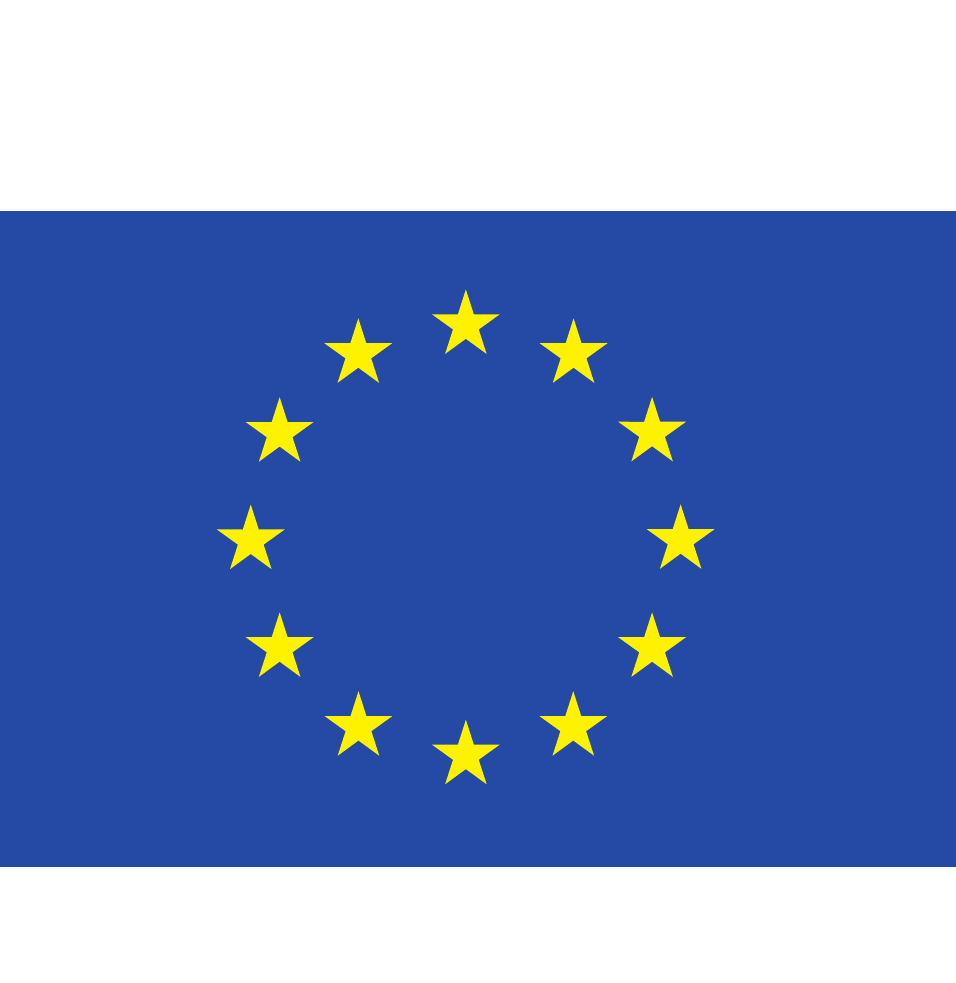} \end{minipage}  \hspace{-2cm} \begin{minipage}[l][1cm]{0.7\textwidth}
 	  This project has received funding from the European Union's Horizon 2020 research and innovation programme under the Marie Sk\l{}odowska-Curie grant agreement No 734922.
 	\end{minipage}}

\begin{abstract}
In this paper we study the metric dimension problem in maximal outerplanar graphs. 
Concretely, if $\beta (G)$ is the metric dimension of a maximal outerplanar graph $G$ of order $n$, we prove that $2\le \beta (G) \le \lceil \frac{2n}{5}\rceil$ and that the bounds are tight. We also provide linear algorithms to decide whether the metric dimension of $G$ is 2  and to build a resolving set $S$ of size $\lceil \frac{2n}{5}\rceil$ for $G$.  Moreover, we characterize the maximal outerplanar graphs with metric dimension 2.

\end{abstract}

\section{Introduction}

Let $G=(V,E)$ be a finite connected simple graph. For two vertices $u,v \in V$, let $d(u,v)$ denote the length of a shortest path in $G$ from $u$ to $v$. If $S=\{ x_1,\dots ,x_k \}$ is a set of vertices of $G$, we denote by $r(u|S)$ the vector of distances from $u$ to the vertices of $S$, that is, $r(u|S)=(d(u,x_1),\dots ,d(u,x_k))$. We say that a vertex $x\in V$ \emph{resolves} a pair of vertices $u,v\in V$ if $d(u,x)\ne d(v,x)$.
A set of vertices $S\subseteq V$ is a \emph{resolving set}
of $G$ if every pair of distinct vertices of $G$ are resolved by some vertex in $S$. Therefore, $S$ is a resolving set if and only if $r(u|S)\not= r(v|S)$ for every pair of distinct vertices $u,v\in V(G)$. The elements of $r(u|S)$ are the \emph{metric coordinates} of $u$ with respect to $S$. A resolving set $S$ of $G$ with minimum cardinality is a \emph{metric basis} of $G$. The \emph{metric dimension} of $G$, denoted by $\beta(G)$,  is the cardinality of a metric basis. The \emph{metric dimension problem} consists of finding a metric basis.

Resolving sets in general graphs were first studied by Slater~\cite{S75} and Harary and Melter~\cite{HM76}. Since then, computing resolving sets and the metric dimension of a graph have been widely studied in the literature due to their applications
in several areas, such as network discovery and verification~\cite{BEEHHMR06}, robot navigation~\cite{KRR96}, chemistry~\cite{CEJO00} or games~\cite{C83}. The reader is referred to~\cite{CHMPP12, CHMPPSW07,  FHHMS15, FMNPV17a, FMNPV17b, GGM-14, GHM-18, GMMRS-14, HMPSW10, MSI-14, YER-17} and the references therein for different results and variants of the metric dimension problem of graphs.

It is well-known that the metric dimension problem in
general graphs is NP-hard~\cite{KRR96}. The problem remains NP-hard even when restricting to some graph classes such as bounded-degree planar graphs~\cite{DPSL17}; split graphs, bipartite graphs and their complements, and line graphs of bipartite graphs~\cite{ELW15}; interval graphs and permutation graphs of diameter 2~\cite{FMNPV17a}. 
Polynomial algorithms
are known for trees~\cite{KRR96};  outerplanar graphs~\cite{DPSL17}; chain graphs~\cite{FHHMS15}; {$k$-edge-augmented trees, cographs and wheels~\cite{ELW15}. 
A weighted variant of the metric dimension problem in several graphs, including paths, trees, and cographs, can be also solved in polynomial time~\cite{ELW15}.

While the algorithms to solve the metric dimension problem in
trees, wheels or chain graphs are linear, the time complexity of the algorithm given in~\cite{DPSL17} for
an outerplanar graph of order $n$ is $O(n^{12})$. Thus, an interesting problem for such graphs is how to find more efficiently a not very large resolving set.
Recall that a graph $G$ is \emph{outerplanar} if it can be drawn in the plane without crossings and with all the vertices belonging to the unbounded face.

In this paper, we focus on studying 
the metric dimension problem in \emph{maximal outerplanar graphs}. A maximal outerplanar graph, \emph{MOP graph} for short, is an outerplanar graph such that the addition of an edge produces a non outerplanar graph. In particular, given a MOP graph $G$ of order $n\ge 3$ we show that $2 \le \beta(G) \le \lceil \frac{2n}{5}\rceil$ and that the bounds are tight. The lower bound is shown to be tight in Section~\ref{sec:md}. Moreover, all MOP graphs with metric dimension 2 are characterized. We also provide in that section a linear algorithm to decide whether the metric dimension of a MOP graph is 2. The tightness of the upper bound is shown in Section~\ref{subsec:fans} by exhibiting a family of MOP graphs attaining the given bound.
Section~\ref{subsec:bound} is devoted to show that the metric dimension of a MOP graph $G$ is at most $\lceil \frac{2n}{5} \rceil$, by building in linear time a resolving set $S$ for $G$ such that $|S| = \lceil \frac{2n}{5} \rceil$. In~\cite{QS17}, it is conjectured
that $\beta(G) \le \lceil \frac{2n}{5}\rceil$ for a maximal planar graph $G$, hence we are answering in the affirmative this conjecture for the particular case of MOP graphs.
We conclude the paper with some open questions in Section~\ref{sec:con}.

To finish this section, we recall some well-known properties of MOP graphs. A MOP graph $G$ {of order at least 3} is biconnected, Hamiltonian and always admits a plane embedding such that all vertices belong to the unbounded face and every bounded face is a triangle.  Unless otherwise stated, we assume throughout the paper that the MOP graph {has order at least 3} and we are given this plane  embedding of $G$. Thus, $G$ can be seen as a triangulation of a convex polygon. Every edge on the boundary of the unbounded face belongs to only one triangle of $G$ and any other edge (called diagonal) belongs to two triangles. The removal of the endvertices of a diagonal makes the graph to be disconnected. $G$ always has at least 2 vertices of degree 2 and when removing any of them (if $|G| \ge 4$), the resulting graph is a MOP graph. From these properties, it is straightforward to see the following result:

\begin{figure}[htb!]
	\centering
	\includegraphics[scale=0.85,page=1]{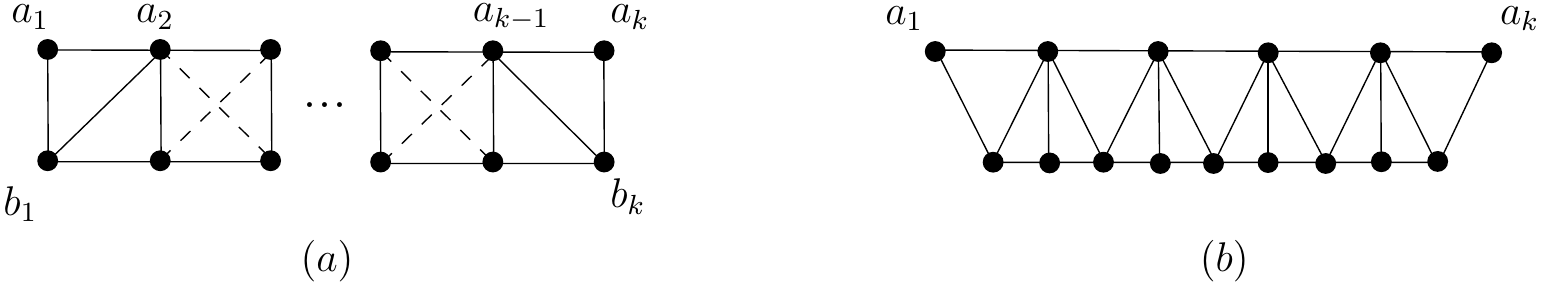}
	\caption{Part (a): Given a 2-tree $G$ with metric dimension 2, a minimal induced 2-connected subgraph containing the basis $\{a_1,a_k\}$, as claimed in~\cite{BDJO17}. Part (b): A 2-tree with metric basis $\{a_1,a_k\}$ whose minimal induced 2-connected subgraph containing $a_1$ and $a_k$ is different from the claimed subgraph in~\cite{BDJO17}.}
	\label{fig:2-tree}
\end{figure}

\begin{remark}\label{diagonales} Let $G$ be a MOP graph and let $xy$ be a diagonal of $G$. If $u$ and $z$ are two vertices belonging to different components of $G\setminus \{x,y\}$, then $d(u,z)\ge \min \{ d(u,x),d(u,y)\} +1$.
\end{remark}

\section{MOP graphs with metric dimension two}\label{sec:md}

Given a MOP graph $G$, its metric dimension must be greater than one, as paths are the only graphs with metric dimension one (see for example~\cite{CEJO00}). In this section, we characterize MOP graphs with metric dimension two.

There are several papers in the literature devoted to study properties of graphs with metric dimension two and to characterize such graphs for certain families of graphs. In~\cite{SH09}, the authors give a general characterization for a graph $G$ to have metric dimension two, based on the distance partition $\{U_1, U_2, \ldots , U_k\}$ of the vertices of $G$, where vertices belonging to $U_i$ are at distance $i$ from a distinguished vertex $v$. They also give a $O(n^2 D^4)$ algorithm to check whether the metric dimension of a graph of order $n$ is two, where $D$ is the diameter of the graph.
In~\cite{KRR96}, the authors show several properties that a graph with metric dimension two must satisfy.

Graphs with metric dimension two have been characterized for some families of graphs. In particular, unicyclic graphs~\cite{DO17} and Cayley graphs~\cite{VBG16}. An incorrect characterization of the 2-trees with metric dimension 2 is given in~\cite{BDJO17}. Starting with a triangle, a 2-tree is formed by repeatedly adding vertices of degree 2 in such a way that each added vertex $u$ is connected to two vertices $v$ and $w$ which are already adjacent. Thus, the family of 2-trees includes MOP graphs as a subfamily.

In~\cite{BDJO17}, the authors define a family $\mathcal{F}$ of 2-trees such that a 2-tree $G$ belongs to $\mathcal{F}$ if $G$ satisfies a set of twelve conditions, and they claim that a 2-tree $G$ has metric dimension 2 if and only if $G$ belongs to $\mathcal{F}$.
When proving that a 2-tree $G$ with metric dimension two must belong to $\mathcal{F}$, the authors claim in one of the cases that the shape of the minimal induced 2-connected subgraph of $G$, containing the two vertices $a_1$ and $a_k$ of the basis of $G$, is as shown in Figure~\ref{fig:2-tree}(a): Two vertices of degree two ($a_1$ and $a_k$), two vertices of degree three ($b_1$ and $b_k$), a set of quadrilaterals with one of the two possible diagonals, and at most one vertex of degree five in the path $a_1, a_2, \ldots , a_k$. But, part (b) of Figure~\ref{fig:2-tree} exhibits a 2-tree $G$ (in fact a MOP graph) with metric dimension two, being $\{a_1,a_k\}$ the only basis of $G$, and the minimal induced 2-connected subgraph of $G$ containing $a_1$ and $a_k$ is precisely $G$, contradicting the shape claimed in~\cite{BDJO17}. As a consequence, their claimed characterization cannot be used to characterize MOP graphs with metric dimension 2.

\begin{figure}[hbt]
\begin{center}	
\includegraphics[width=0.65\textwidth,page=2]{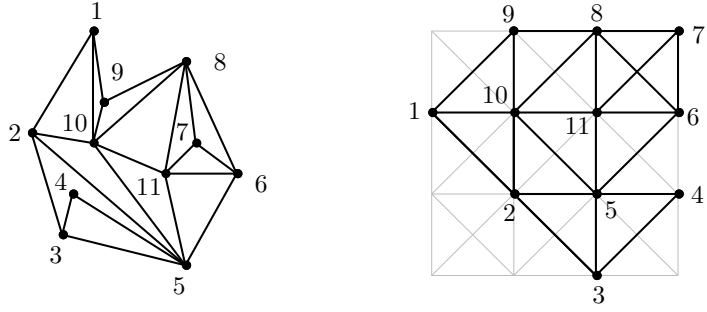}
\caption{Left: A graph $G$ with metric basis $S=\{1,3\}$. The metric coordinates of the vertices are: $r(1|S) = (0,2), r(2|S) = (1,1), r(3|S) = (2,0), r(4|S) = (3,1), r(5|S) = (2,1), r(6|S) = (3,2), r(7|S) = (3,3), r(8|S) = (2,3), r(9|S) = (1,3), r(10|S) = (1,2)$ and $r(11|S) = (2,2)$. Right: The representation $G^*$ of $G$ as a subgraph of $P_n\boxtimes P_n$ with respect to $S$. Vertex $v$ in $G$ is mapped to vertex $v^*$ in $G^*$ such that the cartesian coordinates of $v^*$ are the metric coordinates of $v$.}
\label{fig:ejemplo}
\end{center}
\end{figure}

We next give a characterization for MOP graphs with metric dimension two, based on embedding graphs with metric dimension 2 into the strong product of two paths. The strong product of two paths of order $n$, $P_n\boxtimes P_n$, has the cartesian product $[0,n-1]\times [0,n-1]$ as set of vertices and two different vertices $(i,j)$ and $(i',j')$ are adjacent if and only if $|i'-i|\le 1$ and $|j'-j|\le 1$.
The distance between two vertices of this graph is $d((i,j),(i',j'))=\max \{ |i'-i|,|j'-j| \}$.
We will consider the representation of  this graph in the plane identifying vertex $(i,j)$ with the point with cartesian coordinates $(i,j)$.
In this representation, a path of length $k$ between two vertices $(i,j)$ and $(i',j')$ such that $d((i,j),(i',j'))=k$ is contained in the rectangle having $(i,j)$ and $(i',j')$ as opposite vertices and sides parallel to lines of slope $1$ and $-1$ passing through these vertices.
A set of four vertices of $P_{n}\boxtimes P_{n}$ of the form $ \{(i,j),(i,j+1),(i+1,j+1),(i+1,j) \}$, for some $i,j\in [0,D]$, is a \emph{unit square}.
Three vertices of $P_{n}\boxtimes P_{n}$ are pairwise adjacent if and only if they all belong to a unit square and, in such a case, the edges joining them form a triangle with two consecutive sides one of a unit square and the diagonal joining them.

Let $G$ be a graph with metric dimension $2$ and let $S=\{ u,v \}$ be a metric basis of $G$.
It is straightforward to see that $G$ is isomorphic to a subgraph of the strong product $P_{n}\boxtimes P_{n}$. See Figure~\ref{fig:ejemplo} for an example.
Indeed, we identify vertex $x\in V(G)$ with vertex $(x_1,x_2)\in V(P_{n}\boxtimes P_{n})$, where $(x_1,x_2)=r(x|S)=(d(x,u),d(x,v))$.
Recall that if two vertices $w_1$ and $w_2$ of $G$ are adjacent and $d(w_0,w_1)=d$ for some vertex $w_0$, then $d(w_0,w_2)\in \{ d-1,d,d+1\}$.
Thus, if $x$ and $y$ are adjacent vertices in $G$, then $|d(x,u)-d(y,u)|\le 1$ and $|d(x,v)-d(y,v)|\le 1$, hence $r(x|S)$ and $r(y|S)$ are adjacent in $P_{n}\boxtimes P_{n}$.
We denote by $G^*$ this representation of $G$, that is, $V(G^*)=\{ r(x|S): x\in V(G) \} $ and $r(x|S) r(y|S)\in E(G^*)$ if and only if $xy\in E(G)$. We say that $G^*$  is the \emph{representation of $G$ as a subgraph of $P_{n}\boxtimes P_{n}$ with respect to $S$}, and vertex $(i,j)$ is placed onto the point with cartesian coordinates $(i,j)$.

For every $d\ge 1$, consider the set $A_d=\{ (i,j)\in [0,n-1]\times [0,n-1] :  i+j\ge d, \, \, |j-i|\le d \}$
(see Figure~\ref{fig:prop1} left).
The following properties can be easily derived.

\begin{figure}[hbt]
\begin{center}	
\includegraphics[width=0.7\textwidth,page=3]{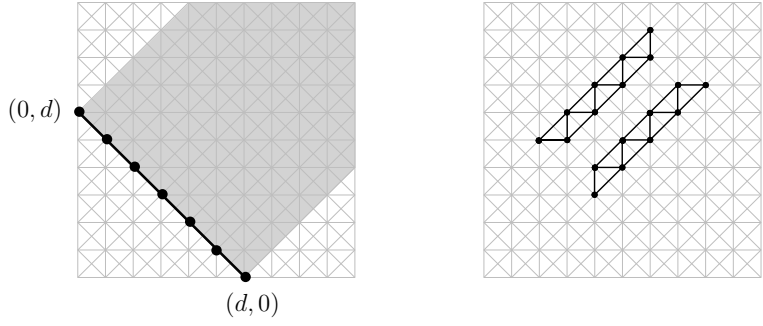}
\caption{Left: Illustrating Proposition~\ref{prop.reprDim2}. The shortest path from $(0,d)$ to $(d,0)$ and the set $A_d$, which is in the shaded region. Right: Examples of horizontal and vertical MOP zigzags.}
\label{fig:prop1}
\end{center}
\end{figure}

\begin{prop}\label{prop.reprDim2}
Let $G$ be a graph with metric dimension $2$,
and let $S=\{u,v\}$ be a metric basis of $G$ such that $d(u,v)=d$.
Consider the representation $G^*$ of $G$ as a subgraph of $P_{n}\boxtimes P_{n}$ with respect to $S$. The following properties hold:
\begin{itemize}
\item[(1)] $S^*=\{(0,d),(d,0)\}$ is a metric basis of $G^*$ and
all the vertices of $G^*$ are in $A_d$.
\item[(2)] There is only one shortest $((0,d),(d,0))$-path in $G^*$, and its vertices are the points $(i,j)$ such that $i+j=d$.
\item[(3)] Three vertices of $G^*$ are pairwise adjacent if and only if they all belong to a unit square.
\end{itemize}
\end{prop}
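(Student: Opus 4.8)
The plan is to reduce everything to a single fact: the map $\varphi\colon V(G)\to V(G^*)$ defined by $\varphi(x)=r(x|S)$ is a graph isomorphism. It is surjective by construction, injective because $S$ is a resolving set, and it preserves adjacency and non-adjacency by the very definition of $G^*$; hence it preserves distances, $d_{G^*}(\varphi(x),\varphi(y))=d_G(x,y)$ for all $x,y\in V(G)$. Since $\varphi(u)=(d(u,u),d(u,v))=(0,d)$ and $\varphi(v)=(d,0)$, we get $S^*=\varphi(S)$, so $S^*$ is a metric basis of $G^*$ because $\varphi$ is an isomorphism carrying the metric basis $S$ of $G$ onto $S^*$. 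For the inclusion $V(G^*)\subseteq A_d$ I would write a generic vertex of $G^*$ as $\varphi(x)=(d(x,u),d(x,v))$ and apply the triangle inequality in $G$: $d(x,u)+d(x,v)\ge d(u,v)=d$ and $|d(x,u)-d(x,v)|\le d(u,v)=d$, while both coordinates are at most $n-1$ since $G$ is connected of order $n$. This settles~(1).

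For~(2), distance preservation yields $d_{G^*}\bigl((0,d),(d,0)\bigr)=d(u,v)=d$, so a shortest $((0,d),(d,0))$-path exists and has exactly $d$ edges. Writing any such path as $(0,d)=p_0,p_1,\dots,p_d=(d,0)$ with $p_t=(a_t,b_t)$, consecutive $p_t$ are adjacent in $P_n\boxtimes P_n$ (as $G^*$ is a subgraph of it), so $|a_{t+1}-a_t|\le1$ and $|b_{t+1}-b_t|\le1$ for all $t$. Telescoping, $\sum_{t=0}^{d-1}(a_{t+1}-a_t)=a_d-a_0=d$ is a sum of $d$ terms each at most $1$, hence every term equals $1$ and $a_t=t$; symmetrically $b_t=d-t$. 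Thus $p_t=(t,d-t)$: the shortest path is uniquely determined and its vertex set is exactly $\{(i,j):i+j=d\}$.

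For~(3), the ``only if'' direction is immediate: three pairwise adjacent vertices of the subgraph $G^*$ are pairwise adjacent in $P_n\boxtimes P_n$, and there three pairwise adjacent vertices must lie in a common unit square, as already recorded. The ``if'' direction is the substantive one. Given vertices $P,Q,R$ of $G^*$ in one unit square, with preimages $x,y,z$, each of the three pairs is at distance $1$ in $P_n\boxtimes P_n$ and hence at $G$-distance at least $1$, so it suffices to exclude distance $\ge2$; suppose, say, $x\not\sim y$. Here I would use the structure recalled at the end of the introduction: the triangulation $G$ contains a diagonal $ab$ whose endpoints separate $x$ from $y$, so any path of $G$ joining two of $x,y,z$ that lie on opposite sides of this cut must pass through $a$ or $b$. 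Combining Remark~\ref{diagonales} with the constraint that, since $P,Q,R$ share a unit square, the $u$-coordinates and the $v$-coordinates of $x,y,z$ pairwise differ by at most $1$, one tracks where $u$, $v$ and $z$ sit relative to $\{a,b\}$ and forces $d_G(x,y)\le1$, a contradiction; then $\varphi^{-1}$ transports the conclusion back to $G^*$. I expect this case analysis --- locating $u$, $v$ and the third vertex $z$ with respect to the separating diagonal, and turning ``$x\not\sim y$'' into a quantitative clash with the unit-square condition --- to be the main obstacle, the rest of the proposition being routine.
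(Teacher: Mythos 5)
Your treatment of items (1) and (2) is correct and follows the paper's route: the triangle inequality gives $V(G^*)\subseteq A_d$, the isomorphism $x\mapsto r(x|S)$ carries the basis $S$ onto $S^*$, and the unique path of length $d$ from $(0,d)$ to $(d,0)$ in $P_n\boxtimes P_n$ (which you establish by telescoping the coordinate increments) is the antidiagonal, hence the unique shortest such path in $G^*$ once distance preservation gives $d_{G^*}((0,d),(d,0))=d$.

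The gap is in item (3). You read the ``if'' direction as asserting that three vertices of $G^*$ lying in a common unit square are pairwise adjacent \emph{in $G^*$}, and you propose to prove this by locating a separating diagonal of the triangulation and invoking Remark~\ref{diagonales}. This cannot be repaired, for two reasons. First, the proposition is stated for an arbitrary graph $G$ with $\beta(G)=2$, so the MOP machinery (diagonals whose removal disconnects the graph) is simply not available in the hypothesis. Second, and decisively, the claim under your reading is false: take $G=C_5$ with basis $S=\{u,v\}$ two adjacent vertices, so $d=1$; the remaining three vertices receive coordinates $(2,1)$, $(2,2)$ and $(1,2)$, which all lie in one unit square, yet the vertices mapped to $(2,1)$ and $(1,2)$ are at distance $2$ in $C_5$. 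The same failure occurs even for MOP graphs whenever all four corners of a unit square are occupied (as happens in Figure~\ref{fig:ejemplo}), since pairwise adjacency of all four preimages would force a $K_4$, which no outerplanar graph contains. The intended content of item (3) --- and all that the paper proves and later uses --- is the statement at the level of the strong product: three vertices are pairwise adjacent in $P_n\boxtimes P_n$ if and only if they lie in a unit square, so in particular three vertices that are pairwise adjacent in $G^*$ must lie in a unit square. Read this way both directions are immediate from the definition of $P_n\boxtimes P_n$, and the case analysis you anticipate as ``the main obstacle'' is neither needed nor possible.
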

\begin{proof}

(1) If $d=d(u,v)$, then $r(u|S)=(0,d)$ and $r(v|S)=(d,0)$.  If $x\in V(G)$, then $r(x|S)=(x_1,x_2)=(d(x,u), d(x,v))$. On the one hand,  $x_1=d(x,u)\le d(x,v)+d(u,v)= d(x,v)+d=x_2+d$ and $x_2=d(x,v)\le d(x,u)+d(u,v)=d(x,u)+d=x_1+d$, hence  $|x_1-x_2|\le d$.  On the other hand, $x_1+x_2=d(x,u)+d(x,v)\ge d(u,v)=d$.

(2) There is only one path of length $d$ joining $(0,d)$ and $(d,0)$ in $P_{n}\boxtimes P_{n}$, and its vertices are $\{ (i,j) : i+j=d \}$. Thus, it is also the only shortest path between $(0,d)$ and $(d,0)$ in $G^*$ because we already know that $d_{G^*}((0,d), (d,0))=d$.

(3) It is also obvious, because three pairwise adjacent vertices of $P_{n}\boxtimes P_{n}$ belong to a unit square.
\end{proof}

For $n\ge 5$, we say that a MOP graph $G$ is a \emph{MOP zigzag} if $G$ has two vertices of degree 2, two vertices of degree 3, each one of them adjacent to a different vertex of degree 2, and the rest of the vertices have degree 4. See Figure~\ref{fig:prop1} right for some examples of MOP zigzags. One can see a MOP zigzag as a MOP graph in which the diagonals form a zigzag path connecting the two vertices of degree 3. For $n=3,4$, we consider a triangle and a quadrilateral with a diagonal as MOP zigzags, respectively.

Given the representation $G^*$ of a graph $G$, we say that an edge $e\in E(G^*)$ is \emph{horizontal} if $e=(i,j)(i+1,j)$,  for some $i,j\ge 0$;   \emph{vertical} if $e=(i,j)(i,j+1)$,  for some $i,j\ge 0$;  \emph{$1$-slope diagonal} if $e=(i,j)(i+1,j+1)$,  for some $i,j\ge 0$;  and \emph{$(-1)$-slope diagonal} if $e=(i,j+1)(i+1,j)$,  for some $i,j\ge 0$.
A \emph{vertical MOP zigzag} with base line a vertical edge $(i,j)(i,j+1)$ (see Figure~\ref{fig:prop1} right) is a subgraph of the strong product induced by the set of vertices
$\{ (i+k,j+k): 0\le k\le r\}\cup \{(i+k,j+1+k): 0\le k\le s \}$, for some $r\ge 1$ and $s\in \{ r-1, r\}$, and a \emph{horizontal MOP zigzag} with base line a horizontal edge $(i,j)(i+1,j)$ is a subgraph of the strong product induced by the set of vertices
$\{ (i+k,j+k): 0\le k\le r\}\cup \{ (i+1+k,j+k): 0\le k\le s\}$, for some  $r\ge 1$ and  $s\in \{ r-1, r\}$.

For any integer $k\ge 1$, let $V_k =\{ (i,j) : i+j=k  \}$. The following theorem characterizes the MOP graphs with metric dimension 2. Any of these MOP graphs consists of a base graph similar to the one shown in {Figure~\ref{fig:md2}$(c)$} and several MOP zigzags joined to this base graph (see Figure~\ref{fig:md2}$(d)$).

\begin{theorem}\label{thm:caractMOPs}
Let $G$ be a MOP graph. Then, $\beta (G)=2$ if and only if there is a representation $G^*$ of $G$ as a subgraph of the strong product of two paths such that for some $d\ge 1$,
\begin{itemize}

\item[(1)] $V(G^*)\subseteq A_d$,
$V_d\cap A_d\subseteq V(G^*)$, and $E(G^*)$ contains the edges of the shortest path joining $(0,d)$ and $(d,0)$. 

\item[(2)]  $V_{d+1}\cap A_d\subseteq V(G^*)$ and for each $(i,j)\in V_{d+1}\cap A_d$, $E(G^*)$ contains the edges $(i,j)(i-1,j)$ and $(i,j)(i,j-1)$. 

\item[(3)] For every pair of vertices $(i,j+1)$ and $(i+1,j)$ of $V_{d+1}$ with $i,j\ge 1$,
we have either $(i,j+1)(i+1,j)\in E(G^*)$ or $\{ (i,j+1)(i+1,j+1), (i+1,j)(i+1,j+1),(i,j)(i+1,j+1) \} \subseteq E(G^*)$.
Moreover, if $(i,j+1)(i+1,j)\in E(G^*)$ belongs to two triangles of $G^*$, then $(i+1,j+1)\in V(G^*)$ and $\{ (i,j+1)(i+1,j+1), (i+1,j)(i+1,j+1) \} \subseteq E(G^*)$. 

\item[(4)]  Any other vertex or edge of the graph belongs to a vertical or horizontal MOP zigzag with base line the edge $(0,d)(1,d)$, or the edge $(d,0)(d,1)$, or any other edge of $G$ from those described in the preceding items with an endpoint in $V_{d+1}$ and the other in $V_{d+2}$, with the additional condition that two distinct maximal vertical or horizontal MOP zigzags do not share any edge. 
\end{itemize}

\end{theorem}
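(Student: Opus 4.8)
The plan is to work throughout with the embedded copy $G^{*}$ and the candidate basis $S^{*}=\{(0,d),(d,0)\}$, treating the two implications separately.

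\emph{Sufficiency.} Suppose $G$ has a representation $G^{*}$ for which (1)--(4) hold for some $d\ge1$. Since $\beta(G)\ge2$ for every MOP graph (as recalled at the start of this section), it suffices to prove that $S^{*}$ resolves $G^{*}$; as the vertices of $G^{*}$ are pairwise distinct lattice points, this follows once we show $d_{G^{*}}\big(x,(0,d)\big)=x_{1}$ and $d_{G^{*}}\big(x,(d,0)\big)=x_{2}$ for every vertex $x=(x_{1},x_{2})$ of $G^{*}$. The inequality ``$\ge$'' is free: $G^{*}$ is a subgraph of $P_{n}\boxtimes P_{n}$, so $d_{G^{*}}(x,(0,d))\ge\max\{x_{1},|x_{2}-d|\}$, and by (1) we have $x\in A_{d}$, that is, $x_{1}+x_{2}\ge d$ and $x_{2}-x_{1}\le d$, whence this maximum equals $x_{1}$; the other coordinate is symmetric. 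For ``$\le$'' I would produce, for each $x$, a path in $G^{*}$ to $(0,d)$ that decreases the first coordinate by exactly $1$ at every step (and likewise to $(d,0)$), arguing by induction on the level $x_{1}+x_{2}$: vertices of $V_{d}$ walk down the spine guaranteed by (1); vertices of $V_{d+1}\cap A_{d}$ step onto the spine via the two edges in (2); the extra vertices of $V_{d+2}$ step down via the edge $(i,j{+}1)(i{+}1,j{+}1)$ or $(i{+}1,j)(i{+}1,j{+}1)$ of (3); and any vertex lying in a MOP zigzag takes the diagonal step $(a,b)\to(a{-}1,b{-}1)$ inside that zigzag, which lowers both coordinates by one, until it reaches the zigzag's base line, which by (4) is one of the edges already treated. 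These verifications are routine, and they give $\beta(G)=2$.

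\emph{Necessity.} Now assume $\beta(G)=2$, fix a metric basis $S=\{u,v\}$ with $d=d(u,v)$, and let $G^{*}$ be the representation of $G$ with respect to $S$; injectivity of $x\mapsto r(x|S)$ (because $S$ resolves $G$) makes $G^{*}\cong G$ a MOP graph. Proposition~\ref{prop.reprDim2} hands us (1) directly. The key to (2) and (3) is a forced-apex computation. Every spine edge $e=(k,d{-}k)(k{+}1,d{-}k{-}1)$, $0\le k\le d-1$, lies in at least one triangle of $G^{*}$ (every edge of a triangulated polygon does), and the only two points of $P_{n}\boxtimes P_{n}$ adjacent to both endpoints of $e$, besides those endpoints, are $(k,d{-}k{-}1)$ -- at level $d-1$, hence outside $A_{d}$ -- and $(k{+}1,d{-}k)$, at level $d+1$. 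Thus the apex of $e$ is forced to be $(k{+}1,d{-}k)$; in particular $e$ lies in a \emph{unique} triangle, so the spine is an arc of the polygon's boundary, $V_{d+1}\cap A_{d}\subseteq V(G^{*})$, and the edges $(k{+}1,d{-}k)(k,d{-}k)$ and $(k{+}1,d{-}k)(k{+}1,d{-}k{-}1)$ lie in $E(G^{*})$, which is (2). For (3) I would inspect the link of an interior spine vertex $(i,j)$ ($i+j=d$, $i,j\ge1$): a short check of which grid points lying in $A_{d}$ can be neighbours of $(i,j)$ shows they belong to $\{(i{-}1,j{+}1),(i,j{+}1),(i{+}1,j{+}1),(i{+}1,j),(i{+}1,j{-}1)\}$; since $(i,j)$ is a boundary vertex, its link is a path from $(i{-}1,j{+}1)$ to $(i{+}1,j{-}1)$, and there are exactly the two such paths listed in (3). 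The ``moreover'' part follows because the only common neighbours of $(i,j{+}1)$ and $(i{+}1,j)$, besides each other, are $(i,j)$ and $(i{+}1,j{+}1)$.

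\emph{Condition (4) and the main difficulty.} What remains -- the technical heart -- is to show that everything of $G^{*}$ not covered by (1)--(3) is organised into edge-disjoint MOP zigzags based on the listed edges. My plan: the edges of the already-identified base graph running from $V_{d+1}$ to $V_{d+2}$, together with $(0,d)(1,d)$ and $(d,0)(d,1)$ (both in $E(G^{*})$ by (2)), are exactly the edges across which the triangulation can reach outside the base. If such an edge $e=ab$ is a diagonal of $G$, then $G\setminus\{a,b\}$ is disconnected; the side not meeting the spine, together with $a$ and $b$, is a sub-MOP-graph $H$, all of whose vertices lie in $A_{d}$ and none of which can equal a base vertex other than $a,b$ (Remark~\ref{diagonales} separates $H$ from the rest of the base). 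I would then prove, by induction on $|H|$, that the confinement to $A_{d}$ together with the requirement that $(0,d)$ and $(d,0)$ still realise the cartesian coordinates of the vertices of $H$ forces every new triangle of $H$ to put its apex in the ``diagonal'' direction away from $e$, so that $H$ is precisely a horizontal or vertical MOP zigzag with base line $e$; the crucial sub-step is that any other placement of an apex either leaves $A_{d}$, duplicates an existing vertex, or puts some vertex of $H$ farther from $(0,d)$ or from $(d,0)$ than its corresponding coordinate allows, contradicting that $S$ resolves $G$. Finally, distinct maximal such $H$'s are delimited by distinct diagonals, so they share at most a base vertex and never an edge, giving the last clause of (4). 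I expect this apex classification inside $H$ to be the one genuinely delicate point; the rest, including the whole sufficiency direction, is comparatively mechanical.
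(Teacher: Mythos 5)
Your proposal is correct and follows essentially the same route as the paper: item (1) from Proposition~\ref{prop.reprDim2}, items (2)--(3) by the forced-apex computation inside $A_d$ (your link-of-a-spine-vertex phrasing is an equivalent repackaging of the paper's triangle count), item (4) by using Remark~\ref{diagonales} to force every apex beyond a horizontal or vertical edge of level at least $d+1$ to be the $1$-slope neighbour, and sufficiency by exhibiting geodesics realizing the coordinates. The only differences are organizational (you grow each zigzag inside the component cut off by its base diagonal, whereas the paper adds triangles globally one at a time), and the key separation mechanism you invoke for the delicate apex classification is exactly the one the paper uses.
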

\begin{proof}
Let us see first that if a MOP graph has metric dimension 2, then it satisfies items (1)-(4).
Item (1) is a consequence of Proposition~\ref{prop.reprDim2}
(see Figure~\ref{fig:md2}$(a)$).

Let us prove now (2).
Recall that every edge of a MOP graph belongs to at least one triangle.
Let $(i,j-1)(i-1,j)$ be an edge of the $((0,d),(d,0))$-path (and thus, $i+j-1=d$).
The only triangle of the strong product with vertices in $A_d$ containing this edge
is that with vertices $(i,j-1)$, $(i-1,j)$ and $(i,j)$. From here, the second item follows
(see Figure~\ref{fig:md2}$(b)$).

\begin{figure}[hbt!]
	\begin{center}
	\includegraphics[width=1\textwidth,page=4]{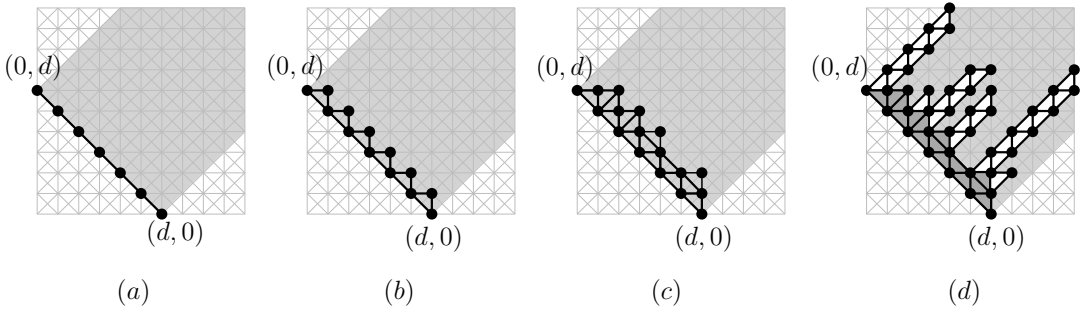}	
	\caption{
	 An example of a MOP graph $G$ with metric dimension 2.  If the vertices of a basis are at distance $d$, then $G$ can be represented as a subgraph $G^*$ of the strong product $P_n\boxtimes P_n$ such that all vertices of $G^*$ belong to the shaded region.
	 Vertices described in Theorem~\ref{thm:caractMOPs} (1), (2), (3) and (4) are added in $(a)$, $(b)$, $(c)$ and $(d)$, respectively. Observe that all vertices of $G^*$ 
belong to the unbounded face.
}\label{fig:md2}
	\end{center}
\end{figure}

To prove item (3), take $(i,j)\in V_d$.
By item (2), we know that  $(i,j)(i,j+1)$ and $(i,j)(i+1,j)$ are edges of $G^*$.
Notice that the edges of the shortest $((0,d),(d,0))$-path belong to exactly one triangle of $G^*$, thus, any other edge incident to $(i,j)\in V_d$ belongs to two triangles of $G^*$. Therefore, the edges $(i,j)(i,j+1)$ and $(i,j)(i+1,j)$ belong to two triangles of $G^*$,
and there are only two possibilities, either $(i,j+1)(i+1,j)\in E(G^*)$ or $\{ (i,j+1)(i+1,j+1), (i+1,j)(i+1,j+1),(i,j)(i+1,j+1) \} \subseteq E(G^*)$. In addition,  if $(i,j+1)(i+1,j)\in E(G^*)$ belongs to two triangles, the only possibility is that $(i+1,j+1)\in V(G^*)$ and $\{ (i,j+1)(i+1,j+1), (i+1,j)(i+1,j+1) \} \subseteq E(G^*)$ (see Figure~\ref{fig:md2}$(c)$).

Finally, let us prove item (4).
Let $t$ be the number of triangles of the MOP graph $G$.
The vertices and edges described in the preceding items (1), (2) and (3) induce a MOP graph, $G_0^*$, with $t_0$ triangles.

If $t=t_0$, then $G_0^*=G^*$ and we are done.
Suppose now that $t > t_0$. In such a case, one of the edges of $G_0^*$ limiting only one triangle in $G_0^*$ must belong to two triangles in $G^*$.
Let $e_0=xy$ be one of these edges and let $z$ be the third vertex of the triangle in $G_0^*$ containing the endpoints of $e_0$.
By definition of $G_0^*$, $e_0$ must be a horizontal edge or a vertical edge.
Besides, $(0,d)$, $(d,0)$ and $z$ belong to the same component in $G^*-\{ x,y \}$.
Assume that $e_0=(i,j)(i+1,j)$ if $e_0$ is a horizontal edge, and $e_0=(i,j)(i,j+1)$ if $e_0$ is a vertical edge, with $i,j\ge 0$. By Remark~\ref{diagonales}, we have that
the third vertex of the other triangle of $G^*$ limited by $e_0$ must be $(i+1,j+1)$.

Let $G_1^*$ be the graph obtained by adding to the graph $G_0^*$ the vertex $(i+1,j+1)$ and the edges joining $(i+1,j+1)$ with the endpoints of $e_0$. Notice that one of the edges added to $G_0^*$ is a 1-slope diagonal edge, and the other one is a horizontal edge if $e_0$ is vertical, or a vertical edge if $e_0$ is horizontal.

Now, if $G^*=G_1^*$, we are done.
Otherwise, there is an edge $e_1$ belonging to exactly one triangle in $G_1^*$ and to two triangles in $G^*$.
By Remark~\ref{diagonales},
there is no $1$-slope diagonal edge $(i,j)(i+1,j+1)$, with $i+j\ge d+1$, limiting two triangles in $G^*$.
Hence, $e_1$ must be a horizontal edge or a vertical edge and we proceed as for $e_0$.
We repeat this procedure until we have added $t-t_0$ triangles to $G_0^*$.
Observe that the new triangles added to $G_0^*$ form a vertical or horizontal MOP zigzag with one of the considered base lines, since the triangles recursively added to $G_0^*$ share vertical or horizontal edges.

Finally, it is not possible that two maximal vertical or horizontal MOP zigzags share an edge $e$. Indeed, in such a case, the edge $e$ should be a $1$-slope diagonal edge $e=(i,j)(i+1,j+1)$, with $i+j\ge d+1$, and $G^*-\{ (i,j) , (i+1,j+1)\}$ would be connected, a contradiction because $e$ is not an edge of the 
unbounded face
(see Figure~\ref{fig:md2}$(d)$).

\begin{figure}[htb]
\begin{center}
\includegraphics[width=0.85\textwidth,page=5]{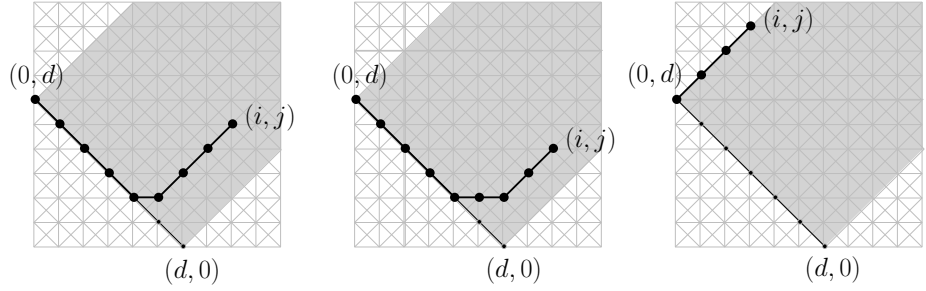}
\caption{A $(0,d)-(i,j)$ path of length $i$ when $i-j$ and $d$ have distinct parity  (left); when $i-j$ and $d$ have the same parity and $i-j\not=-d$  (center) and
 when $i-j=-d$  (right).}\label{fig:caminominimo}
\end{center}
\end{figure}
%

Now, we are going to prove that every graph satisfying (1) to (4) is a MOP graph with metric dimension $2$. By construction, a graph satisfying conditions (1)-(4) is a biconnected plane graph with all vertices belonging to the 
unbounded face and any other face is a triangle. Therefore, $G$ is a MOP graph.
Moreover, $d((0,d),(i,j))=i$ and $d((d,0),(i,j))=j$.
Indeed, it is easy to give a path of length $i$ from $(0,d)$ to $(i,j)$  using some vertices of the shortest $(0,d)-(d,0)$ path;
all the vertices $(i',j')$ such that $i'\le i$, $j'\le j$ and $i'-j'=i-j$; and  vertex
$((d+i-j)/2,1+ (d-(i-j))/2)\in V_{d+1}$, whenever $d$ and $i-j$ have the same parity and with $i-j\not=-d$
(see Figure~\ref{fig:caminominimo}).
In a similar way, a path of length $j$ from $(d,0)$ to $(i,j)$ can be given.
Thus $\{ (0,d),(d,0)  \}$ is a resolving set.  Since $G$ is not a path, we have $\beta (G)=\beta (G^*)=2$.
\end{proof}

{If $G$ is a MOP graph with metric dimension 2, we denote by $G_0^*$ the graph induced by the vertices and edges described in items (1)-(3) of Theorem~\ref{thm:caractMOPs}.}

Deciding whether the metric dimension of a MOP graph is 2 can be done in linear time, as the following theorem shows.

\begin{theorem}\label{thm:algo2}
Given a MOP graph $G$ of order $n$  we can decide in linear time and space whether the metric dimension of $G$ is 2.
\end{theorem}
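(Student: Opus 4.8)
The plan is to turn the structural characterization in Theorem~\ref{thm:caractMOPs} into a constructive linear-time test. The high-level idea: if $\beta(G)=2$ then some metric basis $\{u,v\}$ with $d(u,v)=d$ gives a representation $G^*$ whose ``spine'' $G_0^*$ is essentially forced once we know $u$, $v$, and $d$, and the remaining pieces are MOP zigzags hanging off a bounded number of candidate base edges. So the algorithm should not try all $\binom n2$ pairs blindly; instead I would exploit the rigidity of MOP graphs. First I would recall that a MOP graph has at least two vertices of degree~$2$ and that, by Theorem~\ref{thm:caractMOPs}(1)--(2), in any valid representation the vertices $(0,d)$ and $(d,0)$ of $S^*$ have degree~$2$ in $G^*$, while $(1,d)$ and $(d,1)$ have degree~$3$. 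Hence a metric basis of size~$2$ must consist of two degree-$2$ vertices, and each is adjacent to a degree-$3$ vertex. Since a MOP graph of order $n\ge 4$ can have only $O(1)$ vertices of degree~$2$ in the worst case? — no, that is false (a fan has two, a zigzag has two, but general MOPs can have up to $\lceil n/2\rceil$ ears), so I instead observe: once we fix which degree-$2$ vertex plays the role of $u$, the vertex $v$ is essentially determined by the requirement that the zigzag spine $V_d$ be a shortest path. Concretely, from $u$ one runs a BFS; the candidate partner $v$ must be a degree-$2$ vertex at maximum distance with the property that the level sets $U_0,U_1,\dots$ of $u$ restricted along the central spine form the path $V_d$. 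This lets us reduce to $O(1)$ candidate pairs per starting ear, and testing one pair is linear.

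The core of the proof is therefore the \emph{verification} subroutine: given a candidate pair $\{u,v\}$, decide in $O(n)$ time whether $G$ admits a representation $G^*$ as in Theorem~\ref{thm:caractMOPs}. I would proceed as follows. Run two BFS computations, from $u$ and from $v$, obtaining for every vertex $x$ the pair $(d(x,u),d(x,v))$; this is the purported $G^*$. In linear time check: (a) the map $x\mapsto r(x|S)$ is injective (if not, $\{u,v\}$ is not resolving); (b) every vertex lies in $A_d$ where $d=d(u,v)$; (c) adjacency in $G$ implies adjacency in $P_n\boxtimes P_n$ — this is automatic by the triangle inequality, but one still must check that the embedded graph is plane with triangular bounded faces, equivalently that it is exactly the MOP determined by items (1)--(4). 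The cleanest way is to reconstruct the ``ideal'' graph predicted by the theorem: lay down the spine $V_d\cap A_d$, attach the forced edges to $V_{d+1}\cap A_d$ (item~2), then at each quadrilateral of $V_{d+1}$ choose the diagonal that actually appears in $G^*$ (item~3, a local constant-time decision per quadrilateral), and finally identify along each admissible base edge the maximal MOP zigzag present in $G^*$ and check the ``no two maximal zigzags share an edge'' condition (item~4). Since a MOP graph has $n-2$ triangles and $2n-3$ edges, walking the whole structure once and matching it against $G$ edge-by-edge is linear; the zigzag-attachment check is a simple traversal that greedily extends along horizontal/vertical edges and stops when forced, marking used edges so the non-overlap condition is verified on the fly.

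To bound the number of candidate pairs we need one more structural observation, which I would state and prove as a small lemma: in a MOP zigzag representation, the two degree-$2$ vertices are the \emph{only} vertices of degree $2$, and more generally the spine $V_d$ together with $V_{d+1}$ is uniquely recoverable from the vertex $u=(0,d)$ alone, because $(1,d-1)\in V_d$ is the unique common neighbor of $u$'s neighbor-of-degree-$3$ and the next spine vertex. Propagating this, the entire spine, hence $d$ and hence $v=(d,0)$, is determined by $u$ and by one bit (which of $u$'s two ``sides'' is the $V_{d+1}$ side). Thus there are at most $2\cdot(\text{number of degree-}2\text{ vertices})$ candidate pairs, but in fact only $O(1)$ survive the first few propagation steps, because once we commit to $u$ and the side, the spine either extends consistently or the test fails immediately. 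Running the linear verification on each of these $O(1)$ pairs, and answering ``yes'' iff one succeeds, gives the claimed $O(n)$ time and $O(n)$ space (the space is dominated by storing $G$, its BFS trees, and the reconstructed $G^*$).

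The main obstacle I anticipate is \textbf{the case analysis in the verification step}, specifically item~(3) and the interaction between the choice of diagonals in the $V_{d+1}$ quadrilaterals and the zigzags attached in item~(4): one must be careful that a zigzag hanging off an edge with one endpoint in $V_{d+1}$ and the other in $V_{d+2}$ is distinguished correctly from a zigzag hanging off $(0,d)(1,d)$ or $(d,0)(d,1)$, and that the ``maximal zigzags do not share an edge'' condition is not vacuously satisfied in a way that hides a non-MOP structure. Handling this rigorously amounts to showing that the reconstruction is \emph{deterministic} given $G^*$ — at every step the next triangle is forced by Remark~\ref{diagonales} — so that a single left-to-right sweep suffices and no backtracking (which would ruin linearity) is ever needed. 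I would isolate this determinism as the key claim and devote the bulk of the proof to it, after which the time bound is a routine accounting of one BFS pair plus one sweep of the $O(n)$ triangles per candidate.
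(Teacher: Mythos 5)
There is a genuine gap, and it begins with your structural claim that the two basis vertices must have degree $2$ in $G$. That claim is false: item (4) of Theorem~\ref{thm:caractMOPs} explicitly allows a MOP zigzag with base line $(0,d)(1,d)$, in which case $(0,d)$ acquires a third neighbour in $V_{d+2}$ and has degree $3$. A concrete counterexample to your candidate set: take $d=2$, let $G_0^*$ have vertices $(0,2),(1,1),(2,0),(1,2),(2,1)$ with the diagonal $(1,2)(2,1)$, and attach the single extra vertex $(1,3)$ adjacent to $(0,2)$ and $(1,2)$. This is a MOP graph of order $6$ satisfying (1)--(4), so $\beta(G)=2$ with basis $\{(0,2),(2,0)\}$, where $(0,2)$ has degree $3$; its only two degree-$2$ vertices are $(2,0)$ and $(1,3)$, and $\{(2,0),(1,3)\}$ does \emph{not} resolve $(0,2)$ and $(1,2)$ (both get coordinates $(2,1)$). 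An algorithm that seeds its search only at degree-$2$ vertices and insists the partner also have degree $2$ answers ``no'' on this yes-instance. The paper's candidate set is pairs of \emph{consecutive} vertices of degree $2$ \emph{or} $3$ along the boundary cycle, and that enlargement is not cosmetic --- it is exactly what this example requires.

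The second gap is the time bound. Your verification subroutine runs two BFS computations per candidate pair, i.e.\ $\Theta(n)$ per pair, and the reduction to $O(1)$ pairs (``only $O(1)$ survive the first few propagation steps'') is asserted rather than proved; a MOP graph can have $\Theta(n)$ vertices of degree $2$ and nothing in your sketch prevents $\Theta(n)$ of them from surviving a constant number of propagation steps, which would give $\Theta(n^2)$ overall. The paper never bounds the number of candidate pairs by a constant. Instead it precomputes, in one linear pass, the maximal MOP zigzag $G_v$ around each degree-$2$ vertex $v$ (these zigzags are pairwise edge-disjoint, so the pass is linear), and then tests each consecutive pair $(u_i,u_{i+1})$ of degree-$2$-or-$3$ boundary vertices in time $O(d_i)$, where $d_i-1$ is the number of boundary vertices strictly between them: the spine $G_0^*$ is built from only those $d_i-1$ vertices, and the zigzag attachments are checked in $O(1)$ each using the precomputed $G_v$ and a counting argument. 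Since $\sum_i d_i=n$, the total is linear even with linearly many candidates. Your single-pair verification (reconstruct $G_0^*$, then match zigzags deterministically, using Remark~\ref{diagonales} to force each next triangle) is in the right spirit and close to the paper's, but without the correct candidate set and without this $\sum_i d_i=n$ amortization the algorithm is either incorrect or not linear.
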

\begin{proof}
	It is obvious for $n=3$. 	
	From now on,  suppose that $n\ge 4$.
	
    If $G$ is a MOP zigzag, then one can easily check that  
	its metric dimension is 2, {since two of the four vertices of degree 2 and 3 chosen in a suitable way form a resolving set.} Thus, we may assume that $G$ is not a MOP zigzag and we may also assume that the vertices of $G$ are clockwise ordered along its boundary. From Theorem~\ref{thm:caractMOPs}, the representation of a MOP graph $G$ with metric dimension 2 consists of the graph $G^*_0$ together with some vertical and horizontal MOP zigzags joined to $G^*_0$. Note that every vertical or horizontal MOP zigzag finishes in a vertex of degree 2 in $G$.

Given $G$, in the first step of the algorithm we calculate for every vertex $v$ of degree 2 the \emph{maximal MOP zigzag around $v$}, {denoted by $G_v$}. The set of vertices of {$G_v$} is the maximal set of consecutive vertices  
$S_v=\{u, \ldots , v, \ldots , w\}$ of $G$ around $v$ such that the subgraph induced by $S_v$ is a MOP zigzag. 
By definition, { $uw$ is an edge of $G_v$, that will be denoted by $e_v$.} This subgraph can be calculated by alternately exploring the vertices preceding and following $v$ (see Figure~\ref{fig:zigzagproof}). 

\begin{figure}[htb]
\begin{center}
	\includegraphics[width=0.75\textwidth,page=23]{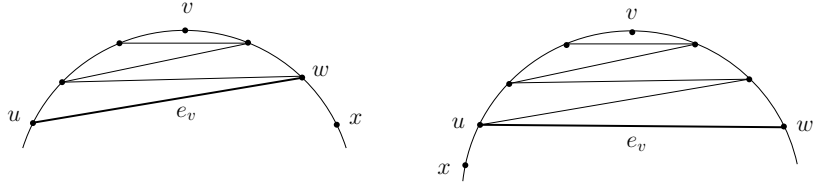}
	\caption{Examples of maximal MOP zigzags around $v$. On the left, vertices $x$ and $u$ are nonadjacent and, on the right, vertices $x$ and $w$ are nonadjacent.}\label{fig:zigzagproof}
\end{center}
\end{figure}

Since calculating $G_v$ only depends on its size, and two maximal MOP zigzags around two vertices $v$ and $v'$ of degree 2 are edge-disjoint, this first step only requires linear time and space. {Notice that if $G$ has metric dimension 2 and $v$ is 
a vertex of degree 2, then the edge $e_v=uw$ of $G_v$} must be a vertical or horizontal edge on the boundary of $G^*_0$, or a $(-1)$-slope diagonal edge of $G^*_0$. This last case (see for example the fourth MOP zigzag when moving along the border of $G_0^*$ from $(0,d)$ in Figure~\ref{fig:md2}$(d)$) only happens when the triangle defined by the vertices $(i,j+1), (i+1,j)$ and $(i+1,j+1)$ belongs to $G^*_0$, with $(i,j+1), (i+1,j)\in V_{d+1}$, and only one of the edges $(i,j+1)(i+1,j+1)$ and $(i+1,j)(i+1,j+1)$ is the base line for a MOP zigzag that contains $v$.

Let $S=\{u_1, u_2, \ldots , u_k\} $ be the set of vertices of degree 2 or 3 of $G$, clockwise ordered when moving along the boundary of $G$. 
From Theorem~\ref{thm:caractMOPs}, we deduce that, {if $G$ has metric dimension 2,} 
then a metric basis of $G$ is formed by two consecutive vertices $u_i$ and $u_{i+1}$ in $S$ (where $u_{k+1}=u_1$).
 Thus, in the second step of the algorithm, 
 {we check  if the set
 $\{ u_i, u_{i+1} \}$ is a metric basis of $G$, for every 
 $i\in \{1,\dots , k\}$. } 

Given a pair $(u_i,u_{i+1})$, this can be done as follows. 
{Suppose that there are $d_i-1$ vertices between $u_i$ and $u_{i+1}$ when traveling clockwise along the boundary of $G$.}
{ Note that using these vertices, checking (and building) if a graph $G^*_0$ as described in items (1), (2) and (3) of Theorem~\ref{thm:caractMOPs} exists can be done in $O(d_i)$ time and space. If such a graph $G^*_0$ exists, the rest of the vertices of $G^*$ must belong to vertical and horizontal MOP zigzags joined to $G^*_0$. 
 This can be again checked in $O(d_i)$ time by visiting clockwise the edges on the boundary of $G^*_0$.
Indeed, an edge $e_v$ associated with a vertex of degree 2 of those calculated in the first step must be a vertical, horizontal or $(-1)$-slope diagonal edge of $G^*_0$. Besides, it can be checked if all vertices of $G$ appear in $G_0^*$ or in a maximal MOP zigzag joined to $G_0^*$, since the number of vertices of $G_0^*$ and of each maximal MOP zigzag is known.
Therefore, as $\sum d_i = n$, this second step also requires linear time and space.}
\end{proof}

\section{Upper bound on the metric dimension of MOP graphs}\label{sec:bound}

In this section, we show that $\beta (G) \le \lceil \frac{2n}{5} \rceil$ for any MOP graph $G$ of order $n$. We also show that, for some special MOP graphs of order $n$, their metric dimension is $\lceil \frac{2(n-2)}{5} \rceil$. Hence, the upper bound $\lceil \frac{2n}{5} \rceil$ is tight when $n$ is a multiple of 5.

In the figures, we will assume   
that the vertices of a MOP graph $G$ are placed on a circle labeled clockwise from $1$ to $n$. The edges will be drawn on or inside the circle as segments or arcs.

\begin{figure}[htb!]
	\begin{center}
		\includegraphics[width=0.25\textwidth,page=6]{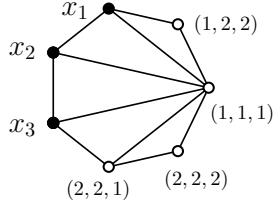}
		\caption{The fan $F_{1,6}$. Black vertices form a metric basis $S$. For a vertex not in $S$, its metric coordinates are given.}
		\label{fig:fan7}
	\end{center}
\end{figure}

\subsection{Fan graphs}\label{subsec:fans}

We first study the metric dimension of a special family of MOP graphs, the fan graphs. A \emph{fan graph} of order $n$, denoted by $F_{1,n-1}$, is a MOP graph such that one of the vertices is connected to the $n-1$ remaining vertices. For $n=3,4,5,6$, one can easily verify that $\beta (F_{1,n-1}) = 2$. For $n=7$, we have $\beta (F_{1,6}) \ge 3$. This result follows from the fact that $n\le \beta +D ^{\beta}$ for a graph with metric dimension $\beta$ and diameter $D$ (see~\cite{KRR96}). As $F_{1,6}$ has diameter 2, if it had metric dimension 2, then $n$ would be at most 6. In addition, the three black vertices of Figure~\ref{fig:fan7} form a metric basis for $F_{1,6}$, so $\beta (F_{1,6}) = 3$.

In the following theorem, we prove that $\beta (F_{1,n-1})= \lceil \frac{2(n-2)}{5} \rceil$, for $n\ge 8$. The proof is based on locating-dominating sets. Given a graph $G=(V,E)$, let $N(u)$ be the set of neighbors of $u$ in $G$, that is, $N(u)=\{ v : uv \in E(G) \}$. A set $S \subseteq V$ is a \emph{dominating set} if every vertex not in $S$ is adjacent to some vertex in $S$. A set $S \subseteq V$ is a \emph{locating-dominating set},
if $S$ is a dominating set and $N(u)\cap   S \neq N(v) \cap S$ for every two different vertices $u$ and $v$ not in $S$. The \emph{location-domination number} of $G$, denoted by $\lambda(G)$, is the minimum cardinality of a locating-dominating set.
It is easy to show that any locating-dominating set is a resolving set. Thus, $\beta (G) \le \lambda (G)$.

\begin{theorem}\label{thm:fan}
Let $n\ge 8$. Then,
{\small
$$\displaystyle{\beta (F_{1,n-1})= \bigg \lceil \frac{2(n-2)}{5} \bigg \rceil} .$$
}
\end{theorem}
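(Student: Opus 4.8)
The plan is to reduce the computation of $\beta(F_{1,n-1})$ to a locating problem on the spine path, then solve that problem by a discharging argument (lower bound) and an explicit periodic construction (upper bound).

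\textbf{Reduction to the spine path.} Write $V(F_{1,n-1})=\{c\}\cup V(P)$, where $c$ is the apex (adjacent to everything) and $P=v_1v_2\cdots v_{n-1}$ is the spine path. Since the graph has diameter $2$, for any set $S$ and any $u\notin S$ all entries of $r(u|S)$ lie in $\{1,2\}$ and the vector is determined by $N(u)\cap S$; hence $S$ is resolving if and only if $u\mapsto N(u)\cap S$ is injective on $V\setminus S$. The inequality $n\le \beta+D^{\beta}$ (already used above for $F_{1,6}$) forces $|S|\ge 3$ once $n\ge 7$. Put $T=S\cap V(P)$. One checks that the apex is always resolved: $N(c)\cap S=T$, which differs from $N(v_i)\cap S$ because the latter has size $\le 2<|S|$ when $c\notin S$, and equals $(N_P(v_i)\cap T)\cup\{c\}\neq T$ when $c\in S$. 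For two spine vertices $v_i,v_j\notin S$, the condition $N(v_i)\cap S\neq N(v_j)\cap S$ reduces to $N_P(v_i)\cap T\neq N_P(v_j)\cap T$. Therefore $S$ is resolving precisely when $T$ is a \emph{locating set of $P$ that leaves at most one vertex undominated}, and $|S|\ge |T|$ with equality exactly when $c\notin S$. Consequently $\beta(F_{1,n-1})=\ell(P_{n-1})$, where $\ell(P_m)$ denotes the minimum size of a set $T\subseteq V(P_m)$ with $v\mapsto N_{P_m}(v)\cap T$ injective on $V(P_m)\setminus T$. It remains to prove $\ell(P_m)=\lceil \frac{2(m-1)}{5}\rceil$ for $m=n-1\ge 7$.

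\textbf{Lower bound $\ell(P_m)\ge \lceil \frac{2(m-1)}{5}\rceil$ by discharging.} Give every vertex charge $1$. Every vertex not in $T$ having $j\ge 1$ neighbours in $T$ sends charge $1/j$ to each of them; the at most one undominated vertex keeps its charge. A vertex $t=v_a\in T$ then holds charge at most $1+\tfrac12+1=\tfrac52$: it has at most two neighbours, and at most one of them can have sent a full unit, since if both $v_{a-1}$ and $v_{a+1}$ sent $1$ they would share the neighbourhood $\{t\}$ within $T$, contradicting that $T$ is locating (a careful but routine inspection of the endpoint cases $a\in\{1,m\}$ and of a neighbour that lies in $T$ only improves this). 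Summing the charges gives $m\le \tfrac52|T|+1$ when some vertex is undominated and $m\le\tfrac52|T|$ otherwise; either way $|T|\ge \frac{2(m-1)}{5}$, and integrality yields $\ell(P_m)\ge \lceil \frac{2(m-1)}{5}\rceil$.

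\textbf{Upper bound and conclusion.} For the matching construction, begin with the classical optimal locating-dominating set of a path: markers at the spine positions congruent to $2$ or $4$ modulo $5$, with the last block of length $<5$ adjusted so that every one of its vertices remains dominated and distinctly located; this set has size $\lceil \frac{2m}{5}\rceil=\lambda(P_m)$. In the two residue classes of $m$ modulo $5$ for which $\lambda(P_m)=\lceil \frac{2(m-1)}{5}\rceil+1$, apply that construction to $v_1,\dots,v_{m-1}$ only and let $v_m$ play the role of the single undominated vertex; since $v_m\mapsto\varnothing$ is then distinct from every other neighbourhood and removing $v_m$ disturbs nothing, this set is locating with one undominated vertex and has size $\lceil \frac{2(m-1)}{5}\rceil$. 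A short case check over $m\bmod 5$ shows $\ell(P_m)=\lceil \frac{2(m-1)}{5}\rceil$ for all $m\ge 7$. Taking $m=n-1$ and combining with $\beta(F_{1,n-1})=\ell(P_{n-1})$ from the reduction gives $\beta(F_{1,n-1})=\lceil \frac{2(n-2)}{5}\rceil$ for $n\ge 8$. The heart of the argument is the discharging inequality; the reduction and the residue/endpoint bookkeeping in the construction are routine.
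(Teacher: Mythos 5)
Your proposal is correct, and while the upper-bound construction is essentially the paper's (an optimal locating--dominating set of a path of order $n-2$ placed so that one endpoint of the spine is left undominated, exactly the Slater-type set the paper imports from~\cite{S88}), your lower bound takes a genuinely different route. The paper first normalizes the basis so the apex is excluded, observes that at most one spine vertex can have all coordinates equal to $2$, and then splits into cases according to the position $i_0$ of that vertex: it invokes the known value $\lambda(P_m)=\lceil 2m/5\rceil$, uses the superadditivity $\lceil 2r/5\rceil+\lceil 2s/5\rceil\ge\lceil 2(r+s)/5\rceil$ when $i_0$ is interior, and separately rules out $i_0\in\{2,n-2\}$. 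You instead package the whole question as computing $\ell(P_{n-1})$, the minimum size of a locating set of the spine leaving at most one vertex undominated, and prove $\ell(P_m)\ge\lceil 2(m-1)/5\rceil$ by a single discharging argument: each marker absorbs at most $5/2$ units because two private dominees of the same marker would share the code $\{t\}$. This is self-contained (no appeal to the value of $\lambda(P_m)$), uniform (no case split on where the undominated vertex sits, since two undominated vertices are excluded automatically by injectivity of the code map), and arguably more portable to other diameter-two reductions; the paper's version is shorter given the cited literature. The pieces you defer --- the endpoint cases in the discharging, and the residue-class bookkeeping ensuring the last marker of the periodic construction avoids $v_{m-1}$ so that $v_m$ really gets the empty code --- are routine and correctly flagged as such, though note that the naive placement of the final marker at position $m-1$ would create a code collision with $v_{m-2}$, so the adjustment you mention is genuinely needed.
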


\begin{proof}
Observe that $\beta (F_{1,n-1})\ge 3$, because  $F_{1,n-1}$ is not a path and graphs with metric dimension 2 and diameter 2 have order at most $6$. Suppose that  the vertices of $F_{1,n-1}$ are labeled so that
$n$ is the vertex of degree $n-1$, and let $P$ be the path of order $n-1$ induced by vertices from $1$ to $n-1$.

We first prove that $\beta (F_{1,n-1})\le  \big \lceil \frac{2(n-2)}{5} \big \rceil$.
In~\cite{S88}, it is shown that a path of order $n-2$ has a locating-dominating set of size $\big \lceil \frac{2(n-2)}{5} \big \rceil$ such that at least one endpoint of the path does not belong to it. Using this fact, we derive that the path of order $n-2$ induced by the vertices from $2$ to $n-1$ has a locating-dominating set $S$ of size $\big \lceil \frac{2(n-2)}{5} \big \rceil$ such that $2\notin S$.
We claim that $S$ is a resolving set for $F_{1,n-1}$. On the one hand, as $n\ge 8$, then $|S|\ge 3$, so $n$ is the only vertex at distance $1$ from every vertex of $S$. On the other hand, $1$ is the only vertex at distance $2$ from every vertex of $S$, because of the choice of $S$. Finally, every other vertex has different vector of distances to $S$ because their neighborhoods in $S$ are different,
so that the 1's in the vectors of distances to $S$ are located in different places. Consequently, $S$ is a resolving set of $F_{1,n-1}$, and hence $\beta (F_{1,n-1})\le  \big \lceil \frac{2(n-2)}{5} \big \rceil$.

\begin{figure}[ht!]
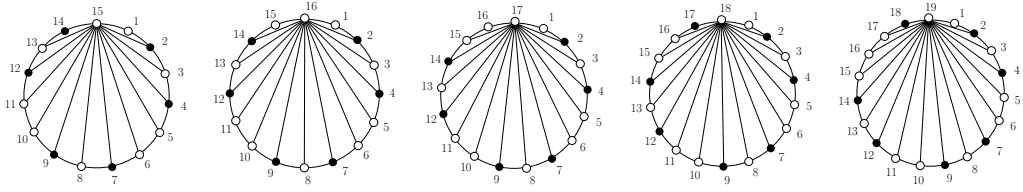

	\centering
	\includegraphics[width=0.17\textwidth,page=7]{img.pdf}\hspace{2mm}
	\includegraphics[width=0.17\textwidth,page=8]{img.pdf}\hspace{2mm}
	\includegraphics[width=0.17\textwidth,page=9]{img.pdf}\hspace{2mm}
	\includegraphics[width=0.17\textwidth,page=10]{img.pdf}\hspace{2mm}
	\includegraphics[width=0.17\textwidth,page=11]{img.pdf}
	\caption{The set of black vertices is a metric basis of the fans of order 15, 16, 17, 18 and 19.}
	\label{fan}
\end{figure}

We now prove that $\beta (F_{1,n-1})\ge  \big \lceil \frac{2(n-2)}{5} \big \rceil$. Let $S$ be a metric basis of $F_{1,n-1}$.
Since $d(i,n)=1$ for $1\le i\le n-1$, vertex $n$ belongs to $S$ only if it has the same coordinates as another vertex $i$ with respect to the set $S\setminus \{ n \}$. Then, $(S\setminus \{ n \})\cup \{ i\}$ is also a metric basis of $F_{1,n-1}$. Hence, we may assume that $n\notin S$ and $n$ is the only vertex with all metric coordinates 1, because $\beta (F_{1,n-1})\ge 3$. Since $F_{1,n-1}$ has diameter 2, all metric coordinates of vertices not in $S$ are 1 or 2. There is at most one vertex with all metric coordinates 2. If there is no vertex with all metric coordinates 2, then $S$ is also a locating-dominating set of the path $P$ of order $n-1$. Hence, $\beta (F_{1,n-1})\ge \lambda (P_{n-1})=\big \lceil \frac{2(n-1)}{5} \big \rceil \ge \big \lceil \frac{2(n-2)}{5} \big \rceil$. If there is one vertex $i_0$ with all metric coordinates 2, then $S$ must be a locating-dominating set for $P-i_0$.
If $i_0\in \{ 1, n-1\}$, then $P-i_o$ is a path of order $n-2$ and $\beta (F_{1,n-1})\ge \lambda (P_{n-2})= \big \lceil \frac{2(n-2)}{5} \big \rceil$.
If $i_0\in \{ 3,\dots ,n-3 \}$, then $P-i_0$ has two connected components that are paths of order $r=i_0-1$ and $s=n-1-i_0$ respectively, with $r+s=n-2$, and we have

$$\beta (F_{1,n-1})\ge
 \lambda (P_{r})+\lambda (P_{s}) = \bigg \lceil \frac{2r}{5} \bigg \rceil+\bigg \lceil  \frac{2s}{5}  \bigg \rceil\ge \bigg \lceil  \frac{2(n-2)}{5} \bigg\rceil.$$

Finally, if $i_0 = 2$, then $1\not\in S$ and $1$ would also be at distance $2$ from every vertex in $S$, a contradiction. Therefore, $i_0 \not= 2$ and, analogously, $i_0\not= n-2$.
\end{proof}

It can be easily verified that 
if $n$ is $5k$, $5k+1$ or $5k+2$ for some $k$, then  $S=\{ 2 + 5r: 0\le r < \lfloor n/5 \rfloor \}\cup \{ 4 + 5r: 0\le r < \lfloor n/5 \rfloor \}$ is a metric basis of  $F_{1,n-1}$,
and if $n$ is $5k+3$ or $5k+4$ for some $k$, then  $S=\{ 2 + 5r: 0\le r < \lfloor n/5 \rfloor \}\cup \{ 4 + 5r: 0\le r < \lfloor n/5 \rfloor \}\cup \{ n-1 \}$ is a metric basis of $F_{1,n-1}$ (see Figure \ref{fan}).
\medskip

\subsection{Upper bound}\label{subsec:bound}

The main goal of this section is to show that
every MOP graph $G=(V,E)$ of order $n$ has  a resolving set $S$ of size $\lceil \frac{2n}{5}\rceil $ that can be built in linear time.
For this purpose, we will begin with a certain set $S$ of vertices of size $ \lceil \frac{2n}{5}\rceil$. If $S$ is a resolving set, we are done.
Otherwise, we will describe how $S$ can be modified to obtain a resolving set  of the same size.
We will refer to the vertices belonging to $S$ as black vertices, and vertices not in $S$ as white vertices.
Recall that the vertices of $G$ are placed on a circle and labeled clockwise from $1$ to $n$, so that all the edges are drawn inside the circle.
A \emph{run} will be a maximal set of consecutive vertices of the same color along the circle.
We will denote by $[i,j]$ the set of vertices $\{i,i+1, \ldots ,j-1,j\}$, if $i<j$, and the set $\{i,i+1,\dots,n,1,\dots ,j-1,j\}$, if $i>j$.

We next prove some technical results.
\begin{lemma}\label{lem:cons}
Let $G$ be a MOP graph of order $n$ and  $i,j\in [1,n]$. If  $i$, $j$, $i-1$ and $j+1$ are four different vertices, then $i$ and $j$ are resolved by either $i-1$ or $j+1$  (mod $n$).
\end{lemma}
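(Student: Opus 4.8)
The plan is to argue by contradiction, exploiting that $G$ is a triangulation of a convex polygon whose boundary is the $n$-cycle $1,2,\dots,n$. Suppose that neither $i-1$ nor $j+1$ resolves the pair $\{i,j\}$. Since the sides of the polygon are edges of $G$, we have $d(i,i-1)=1$ and $d(j,j+1)=1$. If $i-1$ fails to resolve $i$ and $j$, then $d(j,i-1)=d(i,i-1)=1$, so $\{i-1,j\}\in E(G)$; symmetrically, if $j+1$ fails to resolve $i$ and $j$, then $d(i,j+1)=d(j,j+1)=1$, so $\{i,j+1\}\in E(G)$. Thus under our assumption both $\{i-1,j\}$ and $\{i,j+1\}$ are edges of $G$.

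Next I would pin down the cyclic order of the four (pairwise distinct) vertices $i-1,i,j,j+1$ on the boundary circle. Because $i-1$ is immediately followed by $i$ and $j$ is immediately followed by $j+1$ in the clockwise order, and all four are different, these vertices occur, up to rotation, exactly in the cyclic order $i-1,\,i,\,j,\,j+1$. Hence the chord $\{i-1,j\}$ separates the circle into two arcs, one containing $i$ and the other containing $j+1$. Also $\{i-1,j\}$ is a genuine diagonal (it would be a boundary edge only if $j=i$ or $j=i-2$, the latter forcing $j+1=i-1$, both excluded), so by the quoted properties of MOP graphs the removal of its endvertices disconnects $G$, with $i$ and $j+1$ in different components.

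Now Remark~\ref{diagonales}, applied to the diagonal $\{i-1,j\}$ and the vertices $i$ and $j+1$ lying in different components of $G\setminus\{i-1,j\}$, gives $d(i,j+1)\ge \min\{d(i,i-1),d(i,j)\}+1\ge 2$, contradicting $d(i,j+1)=1$. (Equivalently, one can avoid the remark and simply observe that in the plane embedding with all vertices on the outer face the edges $\{i-1,j\}$ and $\{i,j+1\}$ have interleaved endpoints, hence must cross, which is impossible.) This contradiction shows that at least one of $i-1$, $j+1$ resolves $i$ and $j$. I expect the only point requiring care to be the cyclic-order claim and the verification that $\{i-1,j\}$ is a diagonal rather than a boundary edge; everything after that is immediate from the structural facts already recorded for MOP graphs.
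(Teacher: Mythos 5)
Your proof is correct and takes essentially the same route as the paper's: both reduce the claim to the observation that if neither $i-1$ nor $j+1$ resolved the pair, then $\{i-1,j\}$ and $\{i,j+1\}$ would both be edges with interleaved endpoints on the outer cycle, which is impossible in an outerplanar embedding. The paper states this as a one-line crossing argument, whereas you formalize the same fact via the separation property of diagonals (Remark~\ref{diagonales}); the extra care you take with the cyclic order and with ruling out that $\{i-1,j\}$ is a boundary edge is sound but not a different method.
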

\begin{proof}
Observe that $G$ cannot contain at the same time the edges $(i,j+1)$ and $(j,i-1)$ because they cross, whenever $j+1\not= i-1$ (mod $n$). Then, either $i-1$ or $j+1$ resolves $i$ and $j$. See Figure~\ref{fig:alternate}.
 \end{proof}

We have seen in Section~\ref{subsec:fans} that a resolving set of the fan can be obtained with alternating white runs of size 1 and 2 separated by black runs of size $1$.
Such a set is not a resolving set for a general MOP graph $G$,
however, these kinds of sets will play an important role to construct a resolving set of $G$.
This leads us to the following definition.
\vspace{3mm}

\begin{figure}[htb]
\centering
\includegraphics[scale=0.65,page=12]{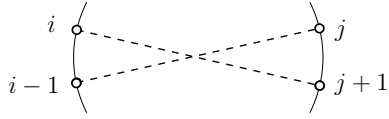}
\caption{Vertices $i$ and $j$ are resolved either by $i-1$ or $j+1$.}\label{fig:alternate}
\end{figure}

We say that an interval $[i,j]$ is \emph{(1,2)-alternating} if and only if
all its white runs have size one or two, black runs have size one and there are no consecutive white runs of the same size.
\vspace{3mm}

Next lemma shows when two white vertices of a (1,2)-alternating  interval
are not resolved by any black vertex of the interval.

\begin{lemma}\label{lem:badcases}
Let $G$ be a MOP graph and let $[i_1,i_2]$ be
a (1,2)-alternating interval
such that the first and last vertices, $i_1$ and $i_2$, are black. Let $S'$ be the set of black vertices in the interval. The following properties hold.
\begin{itemize}
  \item[(1)] Let $i\in [i_1,i_2]$ belong to a white run of size 1. Then, $r(i|S') = r(j|S')$ for some $j\in [i_1,i_2]$ if and only if $j$ belongs to a white run of size 2 and
one of the four cases (a), (b), (c) or (d) of Figure~\ref{fig:onetwo} holds.

  \item[(2)]  Let $i,j\in [i_1,i_2]$ belong to white runs of size 2. Then, $r(i|S') = r(j|S')$  if and only if
 one of the four cases (e), (f), (g) or (h) of Figure~\ref{fig:onetwo} holds.

\item[(3)] If $i\in [i_1,i_2]$ is a white vertex, then there is at most one white vertex $j\in  [i_1,i_2]$ such that $r(i|S')=r(j|S')$.
\end{itemize}
\end{lemma}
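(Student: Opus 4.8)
The plan is to analyze the structure of a (1,2)-alternating interval by laying it out against the structure forced by Remark~\ref{diagonales} and Lemma~\ref{lem:cons}, and to handle the three claims in order since each builds on the previous one. First I would set up coordinates: write the interval as a sequence of runs $B_0 W_1 B_1 W_2 B_2 \dots W_t B_t$, where each $B_\ell$ is a single black vertex and each $W_\ell$ is a white run of size $1$ or $2$, with no two consecutive $W_\ell$ of the same size. For a white vertex $i$ in run $W_\ell$, its metric coordinates $r(i|S')$ are governed mostly by distances to the nearby black vertices $B_{\ell-1}$ and $B_\ell$ (both at distance $1$ if the run has size $1$, or one at distance $1$ and the other at distance $2$ if the run has size $2$ — but this depends on which edges are present), and by a ``monotone-like'' behaviour of distances to black vertices further along the interval. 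The key observation, coming from the MOP (triangulated convex polygon) structure plus Lemma~\ref{lem:cons}, is that two consecutive vertices $i-1,i$ are always resolved by some neighbour on the boundary, so coincidences $r(i|S')=r(j|S')$ can only occur when $i$ and $j$ are ``far apart'' in a controlled way and the local picture near each of them is one of a few shapes.

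For claim (1), I would fix $i$ in a white run of size $1$, so $i$ is adjacent to both flanking black vertices $B_{\ell-1},B_\ell$ and hence has two $1$'s in those coordinates; a vertex $j$ with $r(j|S')=r(i|S')$ must also have exactly those two coordinates equal to $1$ and all other coordinates equal to the corresponding ones of $i$. I would first rule out $j$ being in a white run of size $1$: two distinct white runs of size $1$ have disjoint pairs of flanking black vertices (because black runs have size $1$ and no two white runs of size $1$ are adjacent), so the positions of the $1$'s differ — unless the two size-$1$ runs share a flanking black vertex, which is exactly the adjacency-to-both situation that Lemma~\ref{lem:cons} forbids from producing equal coordinates. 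So $j$ must lie in a white run of size $2$, and then I would enumerate: within a size-$2$ run $\{j,j'\}$, exactly one of $j,j'$ is adjacent to $B_{\ell-1}$-type neighbour and exactly one to $B_\ell$-type, and whether $j$ itself gets two $1$'s depends on which of the two internal diagonals of the quadrilateral $B_{\ell-1} j j' B_\ell$ is present. Matching this against $i$'s pattern, together with the requirement that the remaining coordinates (distances to black vertices on either side, which grow) also agree, pins down precisely the four configurations (a)--(d) of Figure~\ref{fig:onetwo}; this is a finite case check, essentially drawing the two possible diagonals on each side and seeing when the distance vectors can line up.

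For claim (2), both $i$ and $j$ lie in white runs of size $2$; now each of $i,j$ has exactly one coordinate equal to $1$ among its two flanking black vertices (the other flanking coordinate is $2$), plus the monotone coordinates further out. Requiring $r(i|S')=r(j|S')$ forces the single $1$ to be in the same place, which forces the black vertex realizing the $1$ to be shared or to be distance-$1$ from both — again constrained by Lemma~\ref{lem:cons} and by the no-crossing property — and then I would again enumerate the finitely many ways the two quadrilaterals' diagonals can be chosen so that the $2$'s and the outer coordinates also match, yielding exactly cases (e)--(h). Finally, claim (3): I would argue that a given white vertex $i$ cannot be ``ambiguous'' with two different white vertices simultaneously. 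If $i$ is in a size-$1$ run, any partner must be in a size-$2$ run and, by the analysis of (1), must be the specific vertex in one of the configurations (a)--(d) on one specific side of $i$ (left or right), and the outer-coordinate matching prevents both a left-partner and a right-partner from existing at once; similarly if $i$ is in a size-$2$ run, (1) and (2) together leave at most one candidate. So claim (3) follows as a corollary of the enumerations in (1) and (2), by checking that the side/shape data determines the partner uniquely.

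The main obstacle I expect is the case analysis in (1) and (2): one has to be careful that ``matching coordinates'' is not just about the two nearby black vertices but also about all the farther ones, and to show that the farther coordinates are determined (and strictly monotone in a suitable sense) along the (1,2)-alternating interval, so that they force $i$ and $j$ to be symmetrically placed about a short configuration rather than arbitrarily far apart. Establishing that monotonicity of distances to successive black vertices — using Remark~\ref{diagonales} on the diagonals separating runs — is the crux; once it is in hand, the coincidences are squeezed into the local windows shown in Figure~\ref{fig:onetwo} and the rest is bookkeeping over the two choices of diagonal in each relevant quadrilateral.
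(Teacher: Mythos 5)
Your overall route is the paper's: use Lemma~\ref{lem:cons} (two chords that would cross cannot both be present) to turn the equality $r(i|S')=r(j|S')$ into forced adjacencies, then run a finite case analysis over the possible diagonals, and obtain (3) as a corollary of the mutual exclusivity of the resulting configurations. The genuine problem is the step you yourself single out as the crux. You plan to prove a monotonicity of distances to successive black vertices which would force $i$ and $j$ to be ``symmetrically placed about a short configuration rather than arbitrarily far apart,'' squeezing the coincidence into a local window of the circle. For item (2) this is false: in cases (e)--(h) the two size-2 runs can be arbitrarily far apart along the boundary, and the configuration consists of two separate local neighbourhoods joined by long chords $(i,j-1)$ and $(j,i-1)$ (plus one of $(i,j)$ or $(i-1,j-1)$). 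Those chords are exactly what Lemma~\ref{lem:cons} extracts from the equalities $d(i,i-1)=d(j,i-1)$ and $d(i,j-1)=d(j,j-1)$; no circular proximity of $i$ and $j$ follows, and none is needed. A proof organized around ``$i$ and $j$ must be close'' would fail on item (2).

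What replaces that step in the paper is much lighter and does not use any global monotonicity. For the forward direction, one or two extra distance equalities at the black vertices just beyond the runs pin everything down: $2\le d(i,i+4)=d(j,i+4)\le 2$ in cases (a)--(b), and $2\le d(i,j+2)=d(j,j+2)\le 2$ together with $2\le d(j,i+2)=d(i,i+2)\le 2$ in cases (e)--(f); these force the remaining edges of Figure~\ref{fig:onetwo}. For the converse direction (which your plan glosses over but which is needed for the ``if and only if''), one observes that every shortest path from any other black vertex of the interval to $i$ or to $j$ must pass through one of two ``gateway'' vertices (e.g. $i-1$ or $i+4$ in case (a)), and $i$ and $j$ are equidistant from both gateways, so all farther coordinates agree automatically --- Remark~\ref{diagonales} is what you need here, not monotonicity. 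One further slip in your setup for (2): it is not true that a vertex of a size-2 white run has exactly one flanking coordinate equal to $1$; whether it is adjacent to both flanking black vertices depends on which triangle contains the boundary edge of the run, so this must be part of the case analysis rather than an assumption.
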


\begin{figure}[htb]
\centering
\includegraphics[scale=0.65,page=13]{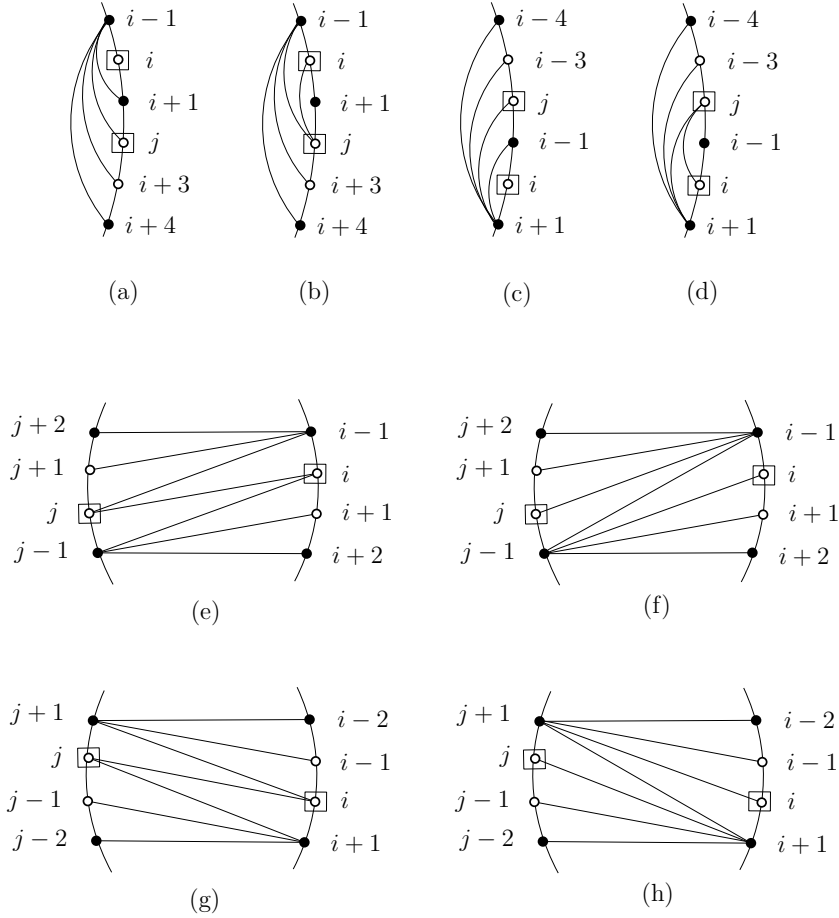}
\caption{The eight cases in which two white vertices $i$ and $j$, squared in the figure, are not resolved by black vertices.}\label{fig:onetwo}
\end{figure}

\begin{proof}
Let us prove item $(1)$.
By Lemma~\ref{lem:cons}, two white vertices belonging to two different runs of size 1 are always resolved by vertices in $S'$.
Suppose now that $i$ belongs to a white run of size 1 and $j$ belongs to a white run of size 2.
If $r(i|S')=r(j|S')$, then, again by Lemma~\ref{lem:cons}, either $j$ is $i+2$ and $j$ is connected to $i-1$, or $j$ is  $i-2$ and $j$ is connected to  $i+1$ (see Figure~\ref{fig:onetwo} top).
Suppose first that $j=i+2$ and $j$ is connected to $i-1$, so that $i-1$ and $i+4$ are black and $i+3$ is white.
Since $j$ is connected to $i-1$, we have that $2\le d(i,i+4)=d(j,i+4)\le 2$. Hence,
vertex $i-1$ is connected to $i+3$ and to $i+4$.  Depending on which edge belongs to $G$, either $(i-1,i+1)$ or $(i,i+2)$, we have Cases (a)--(b) of  Figure~\ref{fig:onetwo}.
Conversely, if Cases (a) or (b) hold, then for any black vertex $i'\in [i_1,i_2]$, the distances from $i'$ to $i$ and $j$ are equal because the shortest path from $i'$ to $i$ or $j$ goes through either $i-1$ or $i+4$, and in these cases the distances from $i$ and $j$ to $i-1$ (resp. to $i+4$) are the same. Thus, $r(i|S') = r(j|S')$.
For the other case, that is, when $j=i-2$ and $j$ is connected to  $i+1$, we have by symmetry that $r(i|S') = r(j|S')$  if and only if
 Cases (c)--(d) of  Figure~\ref{fig:onetwo} hold.
Therefore, we have proved $(1)$.

We now prove item $(2)$. By Lemma~\ref{lem:cons}, it is clear that two white vertices $i$ and $i+1$ belonging to the same white run are resolved by either $i-1$ or $i+2$.
Suppose now that $i$ and $j$ belong to different white runs of size 2 and  $r(i|S')=r(j|S')$.
In such a case, by Lemma~\ref{lem:cons}, if $i-1$ is a black vertex, then $j-1$ is a black vertex, and there is an edge connecting $i$ and $j-1$
and another edge  connecting $j$ and $i-1$.
Also notice that, because of the assumption made in the hypothesis, $i+2$ and $j+2$ are black vertices.
Since $2\le d(i,j+2)=d(j,j+2)\le 2$,
the only possibility for these two distances to be equal is that vertex $i-1$ is connected to both $j+1$ and $j+2$.
Analogously, taking into account that $2\le d(j,i+2)=d(i,i+2)\le 2$, we derive that
vertex $j-1$ must be connected to $i+1$ and $i+2$.
Depending on which edge belongs to $G$, either $(i,j)$ or $(i-1,j-1)$, we have Cases (e)--(f) in Figure~\ref{fig:onetwo}. Conversely, if Cases (e)--(f) hold, then one can easily check that $r(i|S') = r(j|S')$, as there are always  shortest paths from $i$ and $j$ to any other black vertex passing through either $i-1$ or $j-1$.
Cases (g)--(h) of Figure~\ref{fig:onetwo} appear by symmetry when $i+1$ is black instead of $i-1$.
Therefore, $(2)$ follows.
Finally, item (3) is a direct consequence of items (1) and (2).
\end{proof}

Before proving the main result of this section, we give some additional definitions. The vertex of degree two
of Case (c) will be called a \emph{special} vertex.
Let $G=(V,E)$ be a graph.
Given two subsets $S\subset V$ and  $W\subset V$
we say that \emph{$S$ arranges $W$} if every pair of distinct vertices with at least one of them belonging to $W$ is resolved by some vertex in $S$.
If $W$ consists of only one vertex $u$, we say that \emph{$S$ arranges $u$}.
Observe that, by definition, if $S_1$ arranges $W_1$ and $S_2$ arranges $W_2$, then $S_1\cup S_2$ arranges $W_1\cup W_2$. We next prove another technical lemma and the main result of this section, Theorem~\ref{th:bound}.

\begin{lemma}\label{lem:s0}
If $G=(V,E)$ is a MOP graph of order $n$, then there is a set $S_0\subset V=[1,n]$ of black vertices such that $|S_0|= \lceil \frac{2n}{5}\rceil$,
$\{2 \}$ is a white run of size 1, the interval  $[4,n]$ is $(1,2)$-alternating
and $S_0$ arranges the white run $\{ 2 \}$.
\end{lemma}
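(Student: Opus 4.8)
The plan is to build $S_0$ greedily, marching along the circle starting near vertex $2$, and placing black vertices so that the white runs alternate in size $1,2,1,2,\dots$ (with single black vertices in between), while keeping the count exactly $\lceil \tfrac{2n}{5}\rceil$. First I would fix the local picture near the start: declare vertex $2$ white, vertices $1$ and $3$ black (so $\{2\}$ is a white run of size $1$), and then continue around the circle through $4,5,\dots,n$ laying down the pattern. The arithmetic is the crux of the bookkeeping: each ``period'' of the alternating pattern consists of one black vertex followed by a white run, and over a stretch of $5$ consecutive vertices the pattern $\texttt{b}\,\texttt{w}\,\texttt{b}\,\texttt{w}\,\texttt{w}$ uses exactly $2$ black vertices, which is why the target size is $\lceil \tfrac{2n}{5}\rceil$; I would handle the remainder $n \bmod 5$ by a short case analysis (as in the explicit formulas given for fans after Theorem~\ref{thm:fan}), adjusting the last one or two runs so that the interval $[4,n]$ remains $(1,2)$-alternating and the black count lands exactly on the ceiling. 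One must be slightly careful that $1$ and $3$ being black does not conflict with the ``no two consecutive white runs of the same size'' condition on $[4,n]$ — but since the alternation constraint is only imposed on the interval $[4,n]$, and the run containing vertex $2$ sits outside it, there is room to start the $[4,n]$ pattern with either a size-$1$ or size-$2$ white run as needed to make the remainder work out.

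Next I would verify that $S_0$ arranges the white run $\{2\}$, i.e. that vertex $2$ is resolved from every other vertex by some black vertex. This is where Lemma~\ref{lem:cons} does the work: vertices $1$ and $3$ are both black and are exactly $2-1$ and $2+1$, so for any vertex $j$ with $j \neq 1,3$, Lemma~\ref{lem:cons} applied to the pair $\{2,j\}$ (using that $1,3,2,j$ are four distinct vertices when $j \notin \{1,3\}$) gives that $1$ or $3$ resolves $\{2,j\}$; and both $1,3 \in S_0$. The only pairs not covered this way are $\{2,1\}$ and $\{2,3\}$, but $1,3 \in S_0$ themselves and trivially $d(1,1)=0 \neq d(2,1)$ and $d(3,3)=0\neq d(2,3)$, so those pairs are resolved too. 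Hence $S_0$ arranges $\{2\}$.

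The remaining verifications are routine: $\{2\}$ is a white run of size exactly $1$ by construction (its neighbours $1,3$ on the circle are black), and $[4,n]$ is $(1,2)$-alternating by the way the pattern was laid down. I expect the main obstacle to be purely the modular bookkeeping — matching the alternating pattern's natural period-$5$ consumption of black vertices to the exact target $\lceil \tfrac{2n}{5}\rceil$ across all five residue classes of $n \bmod 5$, and checking in the residue classes where one has a little slack (or a little deficit) that the boundary between the run of vertex $2$ and the start of the $[4,n]$ pattern can be arranged without creating two adjacent equal-size white runs inside $[4,n]$ or double-counting/mis-counting the black vertices $1$ and $3$. Once the pattern is pinned down, everything else follows from Lemma~\ref{lem:cons} as above.
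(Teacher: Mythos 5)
Your construction of $S_0$ is essentially the paper's (the same period-$5$ pattern with the remainder handled by cases), and the counting is fine, but the verification that $S_0$ arranges $\{2\}$ rests on a misreading of Lemma~\ref{lem:cons}, and this is exactly the nontrivial part of the statement. Lemma~\ref{lem:cons} says that the pair $\{i,j\}$ is resolved by $i-1$ \emph{or} $j+1$ (one neighbour of $i$ and one neighbour of $j$, on opposite sides); it does \emph{not} say that $\{i,j\}$ is resolved by one of the two neighbours $i-1,i+1$ of $i$. Taking $i=2$ you only get that $\{2,j\}$ is resolved by vertex $1$ or by vertex $j+1$, and $j+1$ may be white. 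Concretely, for $j=4$ the edge $(3,4)$ lies on the outer cycle and the chord $(1,4)$ may also be present (the two chords $(1,4)$ and $(3,4)$ share an endpoint and do not cross), in which case $d(2,1)=d(4,1)=1$ and $d(2,3)=d(4,3)=1$, so neither $1$ nor $3$ resolves $\{2,4\}$. Whether some other black vertex does is precisely what Lemma~\ref{lem:badcases} analyses, and in the configurations (a)--(d) of Figure~\ref{fig:onetwo} the answer is no: there $r(2|S_0)=r(j|S_0)$ for $j\in\{4,n\}$ even though $1,3\in S_0$. Your argument makes this difficulty disappear, so the ``arranges'' claim is unproved and is in fact false for the initial pattern on some MOP graphs.

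The paper closes this gap with two devices that your proposal lacks. For $n\equiv 0,1,2,3\pmod 5$ it starts with the pattern you describe and, if $\{2\}$ fails to be arranged --- which by Lemmas~\ref{lem:cons} and~\ref{lem:badcases} can only happen through one of the configurations (a)--(d) --- it rotates all labels one position counterclockwise; after the rotation the new vertex $2$ cannot lie in any of those configurations, so the relabelled $S_0$ does arrange $\{2\}$. For $n\equiv 4\pmod 5$ it uses a different pattern in which both white runs adjacent to $\{2\}$ have size $1$, so Lemma~\ref{lem:badcases}(1) rules out any unresolved pair involving vertex $2$. You need one of these devices (or a substitute) to complete the proof; the rest of your plan (the local picture near vertex $2$, the $(1,2)$-alternation of $[4,n]$, and the modular bookkeeping) is sound.
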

\begin{proof}
Suppose that $n=5k+t$,  $t\in \{0,1,2,3,4\}$, for some $k\ge 1$.
If $t\in \{0,1,2,3 \}$,
we begin defining $S_0$ as the set of size $\big \lceil \frac{2n}{5} \big \rceil$ that consists of all the vertices of $[1,n]$ of the form $5j+1$ and $5j+3$.
Vertices in $S_0$ are colored black and the rest, white.
If $\{2 \}$ is arranged by $S_0$, we are done.
Otherwise,
by Lemmas~\ref{lem:cons} and \ref{lem:badcases}, $\{2\}$ is the white run of size 1 of some of the subgraphs (a)--(d) of Figure~\ref{fig:onetwo}. If we renumber all the vertices by rotating one place counterclockwise their labels,
and update their colors according to the set $S_0$, the new vertex with label $2$ forms a white run of size 1 not belonging to any of the subgraphs (a)--(d) of Figure~\ref{fig:onetwo}. Hence, $S_0$ arranges $\{ 2 \}$.
If $t=4$, we define $S_0$ as the set formed by vertex $1$ and  all the vertices of $[1,n]$ of the form $5j+3$ and $5j+5$, with $j\ge 0$. Then, $\{ n \}$, $\{ 2 \}$ and $\{4\}$ are three consecutive white runs of size 1 separated by black vertices. By Lemma~\ref{lem:cons}, $S_0$ arranges $2$, and the interval $[4,n]$ is $(1,2)$-alternating.
\end{proof}

\begin{theorem}\label{th:bound}
If $G=(V,E)$ is a MOP graph,  then there exists a resolving set $S\subset V$ such that $|S|= \lceil \frac{2n}{5}\rceil$. Moreover, $S$ can be computed in linear time.
\end{theorem}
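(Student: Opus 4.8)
The strategy is to start from the set $S_0$ furnished by Lemma~\ref{lem:s0}: a black set of size $\lceil\frac{2n}{5}\rceil$ with $\{2\}$ a white run of size $1$, the interval $[4,n]$ $(1,2)$-alternating, and $\{2\}$ already arranged. If $S_0$ happens to be a resolving set, we are finished. Otherwise, by Lemma~\ref{lem:badcases}, the only unresolved pairs of white vertices both lie in $[4,n]$ and each corresponds to one of the eight configurations (a)--(h) of Figure~\ref{fig:onetwo}. Since by item~(3) of that lemma each white vertex has at most one "partner" it is not resolved from, the set of unresolved pairs forms a partial matching on the white vertices. The plan is to walk along the interval $[4,n]$ and, whenever an unresolved pair is encountered, perform a small local recoloring (swap a black vertex for a nearby white one, or shift a black vertex by one position within a run) that destroys that bad configuration without creating a new one and without changing $|S|$. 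The key point is that each bad configuration occupies a bounded-size window of consecutive vertices, and the eight cases of Figure~\ref{fig:onetwo} can be repaired by a case analysis using Lemma~\ref{lem:cons} and Remark~\ref{diagonales}; the special vertex of Case~(c) is singled out precisely because its repair is the delicate one (it is the degree-two vertex, so moving the adjacent black vertex onto it is forced).

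**Carrying it out.** Concretely, I would process the white runs of $[4,n]$ from left to right, maintaining the invariant that the already-processed prefix is arranged by the current (modified) black set. For a white run of size $1$ sitting in one of cases (a)--(d), the fix is to recolor one of the flanking black vertices white and recolor the size-$2$ partner's interior: this replaces a "size $1$ next to size $2$" pattern by a pattern that Lemma~\ref{lem:badcases} no longer flags. For a pair of size-$2$ runs in cases (e)--(h), a symmetric local swap works. In each instance one checks, using Lemma~\ref{lem:cons} (two vertices flanked by distinct neighbours are resolved) and the non-crossing property of diagonals, that the rewritten window is now internally arranged, that no new bad pair is created with vertices to the left (this is where the left-to-right order and the invariant pay off), and that the total count of black vertices is unchanged. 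Because every modification touches only $O(1)$ consecutive vertices and the bad configurations are disjoint as windows (again by item~(3) of Lemma~\ref{lem:badcases}), the whole sweep visits each vertex a bounded number of times, giving the linear-time bound; the construction of $S_0$ in Lemma~\ref{lem:s0} is itself linear, so the overall algorithm is linear.

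**The main obstacle.** The hard part will not be the bookkeeping but showing that the local repairs do not cascade: patching case (c) near one white run must not resurrect a forbidden pattern involving the black vertex shared with the neighbouring run, and one must be sure that after all patches every remaining pair of white vertices — not merely those in the original list — is resolved. I expect the paper to handle this by exploiting that after the repair the interval is no longer $(1,2)$-alternating in that window in a way that falls outside all eight templates of Figure~\ref{fig:onetwo}, so Lemma~\ref{lem:badcases} (which is an iff) certifies resolution; the special-vertex terminology is introduced exactly to name the one template whose repair is forced and therefore safe. A secondary subtlety is the wrap-around at the junction of the interval $[4,n]$ with the white run $\{2\}$ that $S_0$ already arranges: one must confirm the repairs performed near vertex $n$ or vertex $4$ do not disturb the arrangement of $\{2\}$, which should follow because $3$ and $1$ stay black throughout.
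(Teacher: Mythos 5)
Your overall strategy --- start from the $S_0$ of Lemma~\ref{lem:s0}, sweep the white runs in order, and repair each bad configuration of Lemma~\ref{lem:badcases} by a local recoloring while keeping the processed prefix arranged --- is exactly the paper's plan. But there is a genuine gap at the point you yourself flag as ``the main obstacle,'' and your proposed way out does not work. The danger is not that a repaired window ``falls outside the eight templates'': Lemma~\ref{lem:badcases} applies only inside a single $(1,2)$-alternating interval with black endpoints, and once you have performed recolorings the prefix is no longer of that form, so the lemma cannot certify that a pair consisting of one already-processed white vertex and one vertex far ahead is resolved. Concretely, in Case (c) (and similarly (g), (h)) the repair swaps the colors of $i+1$ and $i+2$; after the swap the special vertex $i+3$ may fail to be arranged, and one must determine \emph{which} vertex now shares its coordinates. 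If that vertex could be a distant, already-processed white vertex, the repair would not be local and the sweep would cascade backwards.

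The paper closes this with an extra clause in its invariant (Property P3): for every already-explored white vertex $w$ other than $i-2$ and every future special vertex $l$, some black vertex in $S\cap(I\setminus\{i-1\})$ --- a black vertex guaranteed to survive all subsequent recolorings --- resolves $w$ and $l$. It is this clause that forces the unresolved partner of $i+3$ to be the nearby vertex $i+5$, after which a second swap (of $i+3$ with $i+4$) finishes the window; the clause must then be re-verified after every repair. Your proposal contains no such mechanism, and without it the case analysis for (c), (g) and (h) cannot be completed. A smaller inaccuracy: the bad windows are not pairwise disjoint and a repair is not confined to its original window --- in the cascading subcase of (c) the modified region grows to $[i,i+7]$ and absorbs the next run --- though this affects only constants, not the linear-time claim.
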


\begin{proof}
The general procedure to obtain a resolving set for $G$ is the following. We begin with the set $S:=S_0$ defined in the proof of Lemma~\ref{lem:s0} that arranges the white run $\{2 \}$ of size 1.  If $S_0$ is a resolving set for $G$, we are done. Otherwise, we explore clockwise the white runs of  $S$. Suppose that, after exploring the first $h$ runs, $S$ is a set consisting of $\lceil \frac {2n}5 \rceil$ vertices which is a candidate to be a resolving set for $G$.  If the next white run is not arranged by $S$ because it belongs to some of the subgraphs shown in Figure~\ref{fig:onetwo}, then we define a new set $S'$ by removing some vertices in $S$ and including new ones, such that $|S'| = |S|$ and all explored white runs are arranged by $S'$. Then, we update the set $S$, $S:=S'$, and continue the exploration.

More precisely, suppose that, in a generic step of the exploration, the vertices in the interval $I=[1,i-1]$ have already been explored. Then, we denote by $S$ the set of $\lceil \frac {2n}5 \rceil$ black vertices of $G$ and by $W$ the set of white vertices of $I$, that is, $W=I\setminus S$.

We then prove that $I$ and $S$ satisfy the following invariant.

\begin{invariant*} 
If $I=[1,i-1]$, $S\subseteq V$ and $W=I\setminus S$, then:
\begin{description}
  \item[\it{Property P1.}] $|S|=\lceil \frac {2n}5 \rceil$ and $S$ arranges $W$.
  \item[\it{Property P2.}]  Vertex $i$ is white, vertices $1$ and $i-1$ are black (that is, $\{1,i-1\}\subseteq  S$ and $i \notin S$), and $[i,n]$ is an $(1,2)$-alternating interval.
  \item[\it{Property P3.}] For every white vertex
 $w\in W\setminus \{ i-2  \}$
   and every white {special} vertex $l\in V\setminus I$, there exists a
   black vertex $v\in S\cap (I\setminus \{ i-1\})$ such that $v$ resolves $w$ and $l$.
\end{description}
\end{invariant*}

Obviously, by Property P1, $S$ will be a resolving set  of size $\lceil \frac {2n}5 \rceil$ for $G$ after exploring all white runs.
Properties P2 and  P3 are technical facts that will be needed to proceed with the proof.

We begin verifying that Invariant{} holds for $S=S_0$ and $I=[1,3]$.
By Lemma~\ref{lem:s0}, $S_0$ arranges $2$, and interval $[4,n]$ is $(1,2)$-alternating. Thus, Property P2 obviously holds and Property P3 is true because in this case the set $W\setminus \{ 2 \}= \emptyset$.

Assuming that Invariant{} is true for given sets $I=[1,i-1]$ and $S$, we next show that it holds for new sets $I'$ and $S'$ defined after exploring clockwise the next white run $r$ not belonging to $I$. We will distinguish whether $r$ is already arranged by $S$ or not.

Suppose first that $r$ is arranged by $S$.  If $r$ only consists  of the white vertex $i$,  then one can easily check that Invariant{} holds for $S'=S$ and $I'=I\cup [i,i+1]$. Indeed, as $S$ arranges $W$ and $ i $, then $S$ obviously arranges $W'=W\cup \{i\}$. Property P3 follows from the fact that a vertex in $W$ satisfies Property P3, and a special vertex $l$ cannot be connected to $i-1$, so $i-1$ resolves $i-2$ and $l$. Hence,
$I'$  satisfies Property P3.
If $r$ consists of two white vertices, $i, i+1$,
consider $S'=S$ and $I'=I\cup [i,i+2]$. Sets $S'$ and $I'$  satisfy  Property P1 of Invariant{},
because $S$ arranges $W'=W\cup \{ i,i+1 \}$.
To see that Property P3 is true, notice that a special vertex $l \in V\setminus I'$ is not connected to $i-1$.
Hence, $i-1$ resolves $l$ and any of $i-2$ and $i$.

{Suppose now that $r$ is not arranged by $S$. As $S$ arranges $W$, the white vertices not resolved by $S$ must belong to the interval $[i,n]$. Lemma~\ref{lem:badcases} can be applied to the interval $[i-1,i]$, if $n$ is white, or to the interval $[i-1,n]$, if $n$ is black, since the interval $[i,n]$ is $(1,2)$-alternating by Property 2. Hence, $r$ belongs to one of the subgraphs of Cases (a)-(h). Note that if $r$ has size 1, then $r$ consists of vertex $i$, and if $r$ has size 2, then $r$ consists of vertices $i$ and $i+1$.}

In each one of these 8 cases, the general framework to construct new sets $S'$ and $I'$ satisfying Invariant{} is the following.
{The set $I'$ is obtained by adding an interval $[i,i']$ to $I$, where $i'$ is a black vertex and the vertices of the run $r$ are in $[i,i']$. Then, we interchange the colors of some vertices from $[i-1,i']$, so that the updated set $S'$  of black vertices satisfies $|S'|=|S|=\lceil \frac {2n}5 \rceil$, and Invariant holds for the new sets $I'=I\cup [i,i']$ and $S'$.}
To complete the validity of Property P1, it is needed to show that $S'$ arranges $W'$ after interchanging some colors in $I'$.
This will be proved in two steps.
First, we give a subset of $S'$ that arranges the set of new white vertices, $X= W'\setminus W$.
Secondly, we show that  every pair of white vertices $x$ and $y$, with $x\in W$ and $y\in V\setminus X$, that was resolved  by a vertex from $S\setminus S'$, is now resolved by a vertex from $S'\setminus S$.
Property P2 follows, because we have not changed the colors of the vertices from $V\setminus I'$.
Finally, to prove that the sets $S'$ and $I'$ satisfy Property P3, it is enough to show that it holds for the white vertices in $(X\setminus \{ i'-1\})\cup \{ i-2\}$.
{We next analyze the different cases.}

\vspace{0.5cm}

\noindent

\begin{figure}[htb]
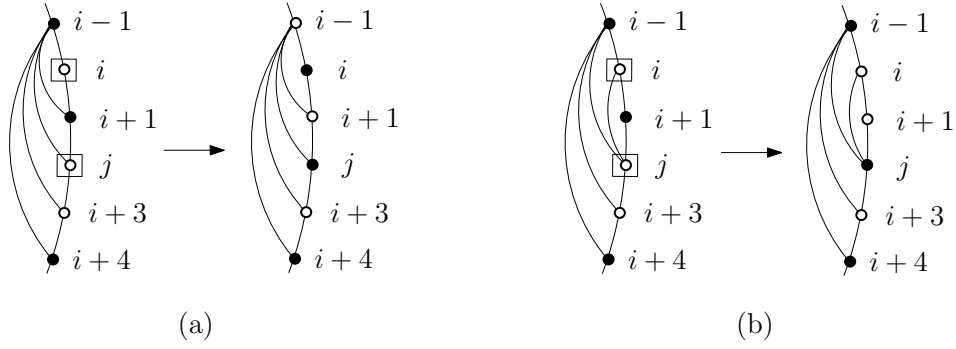

\centering
\includegraphics[scale=0.75,page=14]{img.pdf}
\hskip 20 mm
\includegraphics[scale=0.75,page=15]{img.pdf}
\caption{ Squared vertices have the same coordinates. Case (a): interchanging the colors of $i$ and $i-1$, and $j=i+2$ and $i+1$. Case (b): interchanging the colors of $i+1$ and $j=i+2$.
}\label{fig:caseab}
\end{figure}

\vspace{0.5cm}

\noindent
{\bf Case (a).}
{The two vertices not resolved by S are i and j as shown in Figure~\ref{fig:caseab} (a).
 In this case, we interchange the colors of vertices $i$ and $i-1$ and the colors of the vertices $j(=i+2)$ and $i+1$.}
We claim that Invariant{} holds for the new sets $I'=I\cup [i,i+4]$ and $S'=(S\setminus \{i-1,i+1\})\cup \{i,i+2\}$.

Let us see that $S'$ arranges $W'=W\cup \{i-1, i+1, i+3\}$.
On the one hand, the set $S^*=\{i,i+2,i+4 \}$ arranges $\{i-1,i+1,i+3\}$, because
$r(i-1|S^*)=(1,1,1)$, $r(i+1|S^*)=(1,1,2)$ and $r(i+3|S^*)=(2,1,1)$, respectively, and the only vertices adjacent to $i+2$ are precisely $i-1$, $i+1$ and $i+3$.
On the other hand, the shortest path from
$i'\in [i+5,i-2]$
to $i$, $i+1$ or $i+2$ necessarily goes through $i-1$,
so $d(i',i) = d(i',i-1)+1$ and $d(i',i+2) = d(i',i+1)$. This implies that, if $i-1$ (resp. $i+1$) resolves two vertices in
$[i+5,i-2]$
then $i$ (resp. $i+2$) also resolves them.
In particular, $S'$ arranges $W'$.

Let us see that Property P3 also holds. Take a special white vertex $l$ in $V\setminus I'$. Property P3 clearly holds for the vertices of $W\setminus \{i-2\}$.
Moreover, since $d(l,i+2)\ge 3$ and the distance from $i+2$ to any of $\{ i-2, i-1, i+1 \}$ is at most two, then
we have that $i+2$ resolves $l$ and any white vertex of $\{ i-2, i-1, i+1 \}$.
Therefore, Property P3 is satisfied, and Invariant{} holds as claimed.

\vspace{0.5cm}

\noindent
{\bf Case (b).}
{
The two vertices not resolved by S are i and j as shown in Figure~\ref{fig:caseab} (b).
In this case, we only need to interchange the colors of vertices $j(=i+2)$ and $i+1$.}
%
We claim  that Invariant{} holds for the new sets $I'=[1,i+4]$ and $S'=(S\setminus \{i+1\})\cup \{i+2\}$. Notice that $W'=W\cup \{i, i+1, i+3\}$.

The set  $S^*=\{i+2,i+4\}$ arranges $\{i, i+1, i+3\}$ because $r(i|S^*)=(1,2)$, $r(i+1|S^*)=(1,3)$ and $r(i+3|S^*)=(1,1)$, and the only white vertices adjacent to $i+2$ are precisely $i$, $i+1$ and $i+3$
(see Figure~\ref{fig:caseab} (b)).
Moreover, observe that for a vertex
$i'\in [i+5,i-2]$,
we have $d(i',i+2) = d(i',i+1)-1$, so if $i+1$ resolves two vertices in
$[i+5,i-2]$, then $i+2$ resolves them  as well. As a consequence, $S'$ arranges $W$.

Finally, to prove Property 3, note that the distance from $i+2$ to a special vertex $l\in V\setminus I'$ is at least 3. Thus, $i+2$ resolves the pairs formed by $l$ and a vertex from
$\{i-2,  i , i+1  \}$.

\begin{figure}[htb]
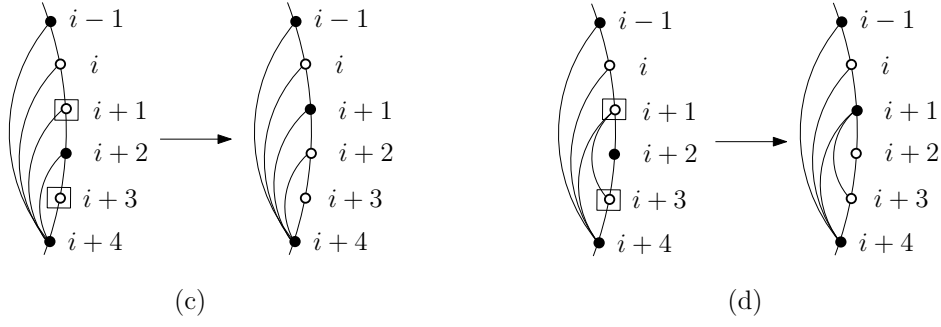

\centering
\includegraphics[scale=0.7,page=16]{img.pdf}
\hskip 20 mm
\includegraphics[scale=0.7,page=17]{img.pdf}
\caption{Squared vertices have the same coordinates. Case (c): Interchanging the colors of i+1 and i+2, when i+3 is arranged by $(S \setminus  \{i+2\}) \cup \{i+1\}$. Case (d): Interchanging the colors of $i+1$ and $i+2$.}\label{fig:casecd}
\end{figure}

\vspace{0.5cm}

\noindent

\vspace{0.5cm}

\noindent
{\bf Case (c).}
In this case, the vertices not resolved by $S$ are $i+1$ and $i+3$ (see Figure~\ref{fig:casecd} (c)).
We begin by interchanging the colors of the vertices $i+1$ and $i+2$, and distinguish two cases depending on whether $(S\setminus \{i+2\})\cup \{i+1\}$ arranges $i+3$ or not.

Suppose first that $ i+3 $ is arranged by $(S\setminus \{i+2\})\cup \{i+1\}$. Then, Invariant{} holds for the sets $S'=(S\setminus \{i+2\})\cup \{i+1\}$ and $I'=I\cup [i,i+4]$. Note that  $W'=W\cup \{i, i+2, i+3\}$. Indeed, observe that  $S^*=\{i-1,i+1\}$ arranges $\{i,i+2\}$, because $r(i|S^*)=(1,1)$, $r(i+2|S^*)=(2,1)$, and the only white vertices at distance $1$ from $i+1$ are $i$ and $i+2$. Besides,
$d(i',i+2) = d(i',i+1)$ for every vertex
$i'\in [i+5,i-2]$,
implying that
every pair of vertices belonging to $[i+5,i-2]$ that were resolved by $i+2$ are now resolved by $i+1$.
In particular, $S'$ arranges $W$.
Therefore, $S'$ arranges $W'$, and Property P1 holds.
Since a special vertex $l$ in $V\setminus I'$ is not connected to either $i-1$ or $i+1$,
then $l$ together with a vertex from $\{ i-2, i, i+2 \}$
are resolved by either $i-1$ or $i+1$.
Then, Property P3 also holds.

Suppose now that $(S\setminus \{i+2\})\cup \{i+1\}$ does not arrange $i+3$. Let us see which vertex $j$ has the same coordinates as $i+3$ in relation to this set.
Notice that  by Property 3, since $i+3$ is a special vertex,  for any vertex $j\in W\setminus \{ i-2\}$ there is a vertex in $I\cap S$ resolving $i+3$ and $j$. 
{Besides, $i-1$ resolves the pair $i+3$ and $i-2$, and $i+1$ resolves $i+3$ and any of $i$ and $i+2$. Hence, $j \notin [1,i+2]$.}
By Property P2, a vertex $j\in [i+5,n]$ is adjacent to a black vertex $j'$, but $j'$ is not adjacent to $i+3$
unless $j'=i+4$ and $j=i+5$. Then,
$i+6$ is white and $i+7$ is black (see Figure~\ref{fig:casecbis}).
{Since $2 \le d(i+3,i+7)$ and $d(i+5,i+7) \le 2$, we have that both distances are equal only when the edges $(i+4,i+7)$ and $(i+4,i+6)$ belong to G.}

If this situation happens, then $i+3$ and $i+5$ have the same coordinates in relation to $(S\setminus \{i+2\})\cup \{i+1\}$. We remark that Property P3 is important at this time to ensure that $i+5$ is the only vertex with the same coordinates as $i+3$. Otherwise, if Property P3 does not hold, then a vertex $x$ in $W$ could have the same coordinates as $i+3$, because $i+2$ could be the only vertex in $S$ to resolve $i+3$ and $x$.

\begin{figure}[htb]
\centering
\includegraphics[scale=0.7,page=18]{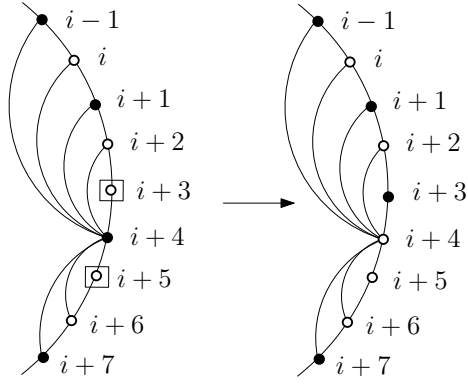}
\caption{Case (c). Squared vertices have the same coordinates. When $ i+3 $ is not arranged by $(S\setminus \{i+2\})\cup \{i+1\}$, the colors of $i+3$ and $i+4$ are interchanged.}\label{fig:casecbis}
\end{figure}

We interchange the colors of vertices $i+3$ and $i+4$, as shown in Figure~\ref{fig:casecbis}, and set $I'=[1,i+7]$ and $S'=(S\setminus \{i+2, i+4\})\cup \{i+1,i+3\}$. Thus, $W'=W\cup \{i, i+2, i+4, i+5, i+6\}$. The argument to prove that Invariant{} holds for these new sets is similar to the previous ones, but a bit more elaborated.

Let us show that $S'$ arranges $W'$.
On the one hand, if $S^*=\{i-1, i+1,i+3\}\subseteq S'$, then $r(i|S^*)=(1,1,2)$, $r(i+2|S^*)=(2,1,1)$ and $r(i+4|S^*)=(1,1,1)$. In addition, the only vertices at distance $1$ from $i+1$ are $i$, $i+2$ and $i+4$. Hence, $S'$ arranges  $\{i, i+2, i+4\}$.
On the other hand, for every vertex $i'\in [i+5,i-2]$, we have $d(i',i+2) = d(i',i+1)$ and $d(i',i+3) = d(i',i+4)+1$. This implies that, any pair of vertices from $[i+5,i-2]$ resolved by $i+2$ or $i+4$ is also resolved by
$i+1$ or $i+3$.
Therefore, since $W\subseteq [i+5,i-2]$ and $S$ arranges $W$, we derive that $S'$ arranges $W$.
It only remains to prove that $S'$ arranges $\{ i+5, i+6 \}$. Notice that $i+7$ resolves the pair $i+5$ and $i+6$.
Moreover, by Property P2, a white vertex $j$ in $V\setminus I'$ is adjacent to a black vertex $j'$. Since $i+5$ and $i+6$ cannot be connected to $j'$, then $j'$ resolves $j$ and any vertex of $i+5$ and $i+6$. Note that if $j=i+8$, then $i+9$ is such a vertex $j'$.
Finally,
as $S'$ arranges $W$ and $\{i, i+2, i+4\}$, a vertex from $\{i+5,i+6\}$ and  a vertex from $[1,i+4]$ are resolved by some vertex of $S'$. Hence, Property P1 is satisfied.

To show that Property P3 holds, we only need to prove this property for the vertices $i-2$, $i$, $i+2$, $i+4$ and $i+5$.
For a special vertex $l$ in $V\setminus I'$, its distance to $i+3$ is at least 3. Since the distance from $i+3$ to $i$, $i+2$, $i+4$, or $i+5$ is at most 2, vertex $i+3$ resolves $l$ and any of these four vertices.
The pair $l$ and $i-2$ is resolved by $i-1$, because $l$ is not adjacent to $i-1$.

\vspace{0.5cm}

\noindent
{\bf Case (d).}
In this case, the vertices not resolved by $S$ are $i+1$ and $i+3$ in the subgraph shown in Figure~\ref{fig:casecd} (d).
This case is symmetric to Case (b). Following the same kind of arguments used in that case, one can easily prove that Invariant{} holds for the sets $I'=[1,i+4]$ and $S'=(S\setminus \{i+2\})\cup \{i+1\}$, defined after interchanging the colors of vertices $i+1$ and $i+2$ (see Figure~\ref{fig:casecd} (d)).

\begin{figure}[htb]
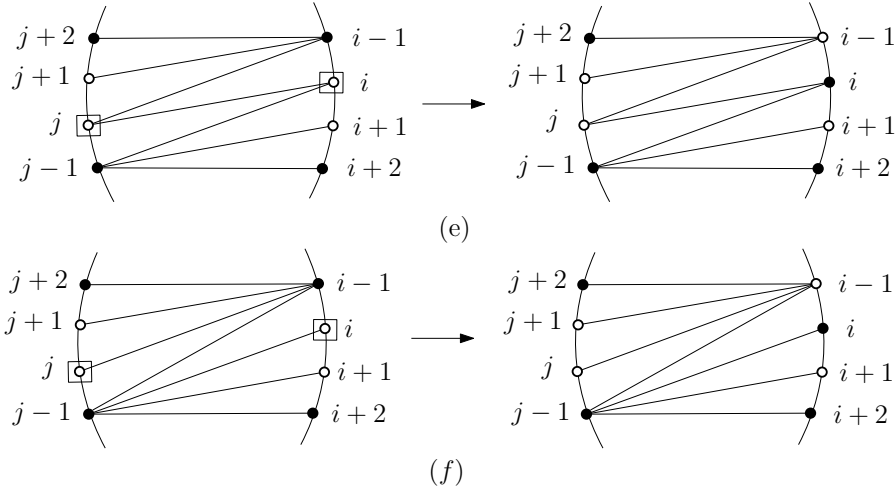

\centering
\includegraphics[scale=0.7,page=19]{img.pdf}
\hskip 12 mm
\includegraphics[scale=0.7,page=20]{img.pdf}
\caption{Cases (e) and (f): squared vertices have the same coordinates. Interchanging the colors of $i-1$ and $i$.}\label{fig:caseef}
\end{figure}
\vspace{0.5cm}

\noindent
{\bf Case (e).}
In this case, the vertices not resolved by $S$ are $i$ and $j$ in the subgraph shown in Figure~\ref{fig:caseef} (e).
We interchange the colors of vertices $i-1$ and $i$ and we define $I'=I\cup [i,i+2]$ and $S'=(S\setminus \{i-1\})\cup \{i\}$. Thus, $W'=W\cup \{i-1, i+1\}$ (see Figure~\ref{fig:caseef} (e)).

Let us see first that  $S'$ arranges $W'$.
On the one hand, the set $S^*=\{ i,i+2\}$ arranges $\{ i-1, i+1\}$. Indeed, $r(i-1|S^*)=(1,3)$, $r(i+1|S^*)=(1,1)$, and the only white vertices belonging to $V$ at distance 1 from $i$ are $i-1$, $i+1$ and $j$, but $r(j|S^*)=(1,2)$.
On the other hand, $d(i',i)=d(i',i-1)+1$ for every vertex
$i'\in [j+1,i-2]$.
Hence, since
$W\subseteq [j+3,i-2]$,
every pair of vertices with at least one of them belonging to $W$ and the other to
$[j+3,i-2]$
that was resolved by $i-1\in S$ is now resolved by $i\in S'$.
It only remains to prove that every pair formed by a vertex $i'$ from $W$ and a white vertex $j'\in [i+3,j]$ is resolved by some vertex of $S'$. This is true because, by Property P2, $j'$ is adjacent to a black vertex in $S'\cap [i+3,j]$ that is not adjacent to $i'$.
Hence, Property P1 holds.
To prove property P3, it suffices to check that it holds for the vertices $i-1$ and $i-2$. If $l\in V\setminus I'$ is a special vertex different from $j-2$, then $d(l,i)\ge 3$. Hence, the pairs formed by $l$ and a vertex from $\{ i-1, i-2 \}$ are resolved by $i$,  whenever $l\not= j-2$.
Suppose that $l=j-2$ is a special vertex.  If we take a black vertex $j''\ne i-1$ in $I$, then $d(j-2,j'') = 3 + d(i-1,j'')$ and $d(i-2,j'') \le d(i-2,i-1) + d(i-1,j'') = 1 + d(i-1,j'')$. Hence,   $j''\in I'$ resolves $j-2$ and any of $i-1$ and $i-2$.

\vspace{0.5cm}

\noindent
{\bf Case (f).}
In this case, the vertices not resolved by $S$ are $i$ and $j$ in the subgraph shown in Figure~\ref{fig:caseef} (f).
This case is very similar to the previous one.
By interchanging the colors of vertices $i-1$ and $i$ (see Figure~\ref{fig:caseef} (f)), the proof that sets $I'=[1,i+2]$, and $S'=(S\setminus \{i-1\})\cup \{i\}$ satisfy Invariant{} is essentially the same as the proof done in Case (e), with small differences due to the fact that the edge $(i-1,j-1)$ now belongs to $G$ instead of edge $(i,j)$.

\begin{figure}[htb]
\centering
\includegraphics[scale=0.6,page=21]{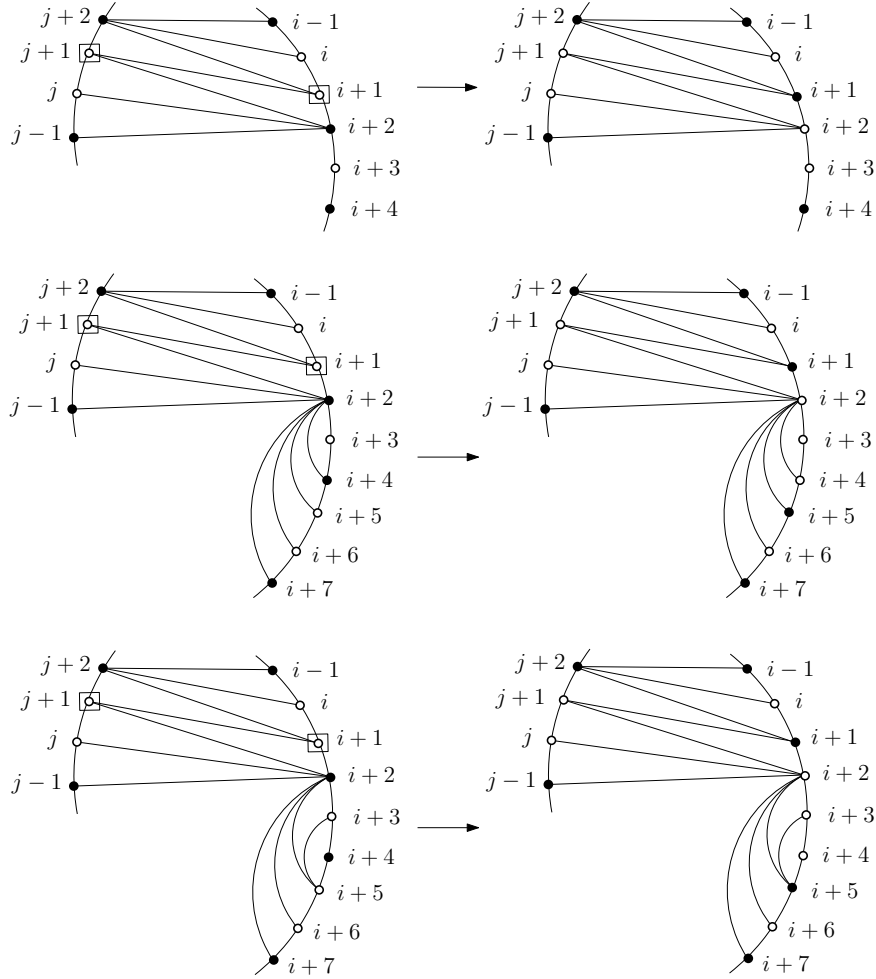}
\caption{Case (g). Squared vertices have the same coordinates. Top: Interchanging the colors of $i+1$ and $i+2$. Middle and bottom: Interchanging the colors of $i+1$ and $i+2$ and the colors of $i+4$ and $i+5$.}\label{fig:caseg}
\end{figure}

\vspace{0.5cm}

\noindent
{\bf Case (g).}
In this case, the vertices not resolved by $S$ are $i+1$ and $j+1$ in the subgraphs shown in Figure~\ref{fig:caseg}.
We begin by interchanging the colors of vertices $i+1$ and $i+2$.
We distinguish two cases depending on whether $(S\setminus \{i+2\})\cup \{i+1\}$ arranges $i+3$ or not.

Suppose first that $(S\setminus \{i+2\})\cup \{i+1\}$ arranges $i+3$  (see Figure~\ref{fig:caseg}, top). We claim that $S'=(S\setminus \{i+2\})\cup \{i+1\}$ and $I'=I\cup [i,i+4]$ satisfy Invariant{}.
Note that $W' = W\cup \{i, i+2, i+3\}$.

On the one hand, the set  $S^*=\{i-1,i+1\}$ arranges $\{i, i+2, j+1\}$.
Indeed, $r(i|S^*)=(1,1)$, $r(i+2|S^*)=(3,1)$, $r(j+1|S^*)=(2,1)$ and the only white vertices adjacent to $i+1$ are $i$, $i+2$ and $j+1$. We include here vertex $j+1$ to ensure that $j+1$ and a vertex in $W$ are resolved.
On the other hand,
a white vertex $i' \in [i+4,j]$ is adjacent by Property P2 to a black vertex $j'$ in this interval. Thus, $j'$
resolves any pair formed by $i'$ together with every white vertex of
$W\subseteq [j+3,i-2]$
because the vertices of this last interval are not adjacent to $j'$.
In addition,
since $d(i',i+1)=d(i',i+2)-1$ for every vertex
$i'\in [j+3,i-1]$
and
$W\subseteq [j+3,i-1]$,
every pair of vertices in this interval, with one of them in $W$, that was resolved by $i+2$ is now resolved by $i+1$.
Hence, $S'$ arranges $W$.
Besides, since a special white vertex $l$ is not connected to either $i+1$ or $i-1$, and vertices $i-2$, $i$ and $i+2$  are adjacent to at least one of them, Property P3 holds.

Suppose now that $(S\setminus \{i+2\})\cup \{i+1\}$  does not arrange $i+3$. Let us see which white vertex $j'$ has the same coordinates as $i+3$ with respect to this set.
A vertex in the interval
$[j,i+1]$
is not adjacent  $i+4$, thus $j'\in [i+2,j-2]$. Moreover, $j'\not= i+2$, because $i+3$ is not adjacent to $i+1$, and consequently,
$j'\in [i+5,j-2]$. Observe now that if $d(i+3, i+1) = d(j',i+1) = 2$ then $d(i+3, i+2) = d(j',i+2) = 1$. Taking vertex $i+2$ as a black vertex, we can apply Lemma~\ref{lem:badcases} to the $(1,2)$-alternating interval $[i+2,n]$ (or $[i+2,1]$), giving rise to the only two possibilities shown in Figure~\ref{fig:caseg}, middle and bottom, for a vertex $j'=i+5$ to have the same coordinates as $i+3$.

Consider the case shown in Figure~\ref{fig:caseg}, middle:  vertex $i+2$ connected to vertices $i+3, i+4, i+5, i+6$ and $i+7$.
In addition to the changes of color of $i+1$ and $i+2$, we interchange the colors of vertices $i+4$ and $i+5$.
We claim that the sets
$S'=(S\setminus \{i+2, i+4\})\cup \{i+1,i+5\}$ and $I'=I\cup [i,i+7]$ satisfy Invariant{}.
Note that $W'=W\cup \{i, i+2, i+3, i+4, i+6\}$.
The set $S^*=\{i-1,i+1,i+5,i+7\}$ arranges $\{ i, i+2,i+4, i+6, j+1 \}$, since  $\{ i, i+2, i+4, i+6, j+1 \}$
are the only white vertices adjacent to $i+1$ or $i+5$,
and $r(i|S^*)=(1,1,3,3)$, $r(i+2|S^*)=(3,1,1,1)$, $r(i+4|S^*)=(4,2,1,2)$, $r(i+6|S^*)=(4,2,1,1)$  and $r(j+1|S^*)=(2,1,2,2)$.
On the other hand,
$i+3$  has no black neighbor. Hence, any other white vertex $i'\in V\setminus W'$ has at least a black neighbor that resolves $i+3$ and $i'$. If $i'\in W\subseteq [j+3,i-2]$, then $i+5$ resolves $i'$ and $i+3$, because $d(i',i+5)\ge 4$, but $d(i+3,i+5)=2$.
Thus, $S'$ arranges $i+3$.
Finally, as $S$ arranges $W$ and
$W\subseteq [j+3,i-2]$,
then $S'$ also arranges $W$ taking into account that
(1)  $j+1$ is already resolved from a vertex in $W$, (2) for every vertex
$i'\in [j+3,i-2]$, we have
$d(i',i+5)=d(i',i+4)$ and $d(i',i+1)=d(i',i+2)-1$
and that (3) a black vertex adjacent to a white vertex of $V\setminus I'$ in the interval $[i+8,j]$ cannot be connected to a vertex in $W$. Therefore, $S'$ arranges $W'$ and Property P1 holds.

To show that Property P3 holds we only need to resolve the pairs formed by a special vertex $l\in V\setminus I'$ and
one of the vertices from $\{ i-2,i,i+2,i+3,i+4\}$ using a vertex from $S'\cap [1,i+5]$. Vertex $i-1$ resolves $l$ and any of $\{i-2, i\}$ because $l$ is not adjacent to $i-1$, and $i+5$ resolves $l$ and any of $\{i+2,i+3,i+4\}$ because $d(l,i+5)\ge 3$. Therefore, Property P3 also holds, and Invariant{} is satisfied, as claimed.

For the last case, the one shown in Figure~\ref{fig:caseg}, bottom, the analysis is very similar to the previous one. Following the same steps as described in the two previous paragraphs, one can prove that $S'=(S\setminus \{i+2, i+4\})\cup \{i+1,i+5\}$ and $I'=I\cup [i,i+7]$ satisfy Invariant{}. The set  $W'$ is $W\cup \{i, i+2, i+3, i+4, i+6\}$.
In this case, it can be shown that the set
 $S^*=\{i-1,i+1,i+5,i+7\}$  arranges $\{ i, i+2, i+3, i+4, i+6, j+1\}$.
Moreover, for every white vertex
$i'\in [j+3,i-2]$,
we have $d(i',i+1) = d(i',i+2)-1$ and  $d(i',i+5) = d(i',i+4)-1$.
Thus, every pair of  white vertices from
$ [j+3,i-2]$
that was resolved by $i+2$ or $i+4$ is resolved now by
$i+1$ or $i+5$.
For every white vertex $i'\in [i+8,j]$, the black vertex adjacent to $i'$ is not adjacent to
a vertex in $W\subseteq [j+3,i-2]$.
Hence, $S'$ arranges $W$.

Finally,  to show that Property P3 holds we only need to resolve the pairs formed by a special vertex $l\in V\setminus I'$ and
one of the vertices from $\{ i-2,i,i+2,i+3,i+4\}$ using a vertex from $S'\cap [1,i+5]$.
All the vertices in $\{ i-2,i,i+2,i+3,i+4\}$  are adjacent to either $i-1$ or $i+5$, but a special vertex $l\in V\setminus I'$ is not adjacent to either $i-1$ or $i+5$, so $i-1$ or $i+5$ resolves $l$ and any of these five vertices. From this, Invariant{} holds as claimed.

\begin{figure}[htb]
\centering
\includegraphics[scale=0.55,page=22]{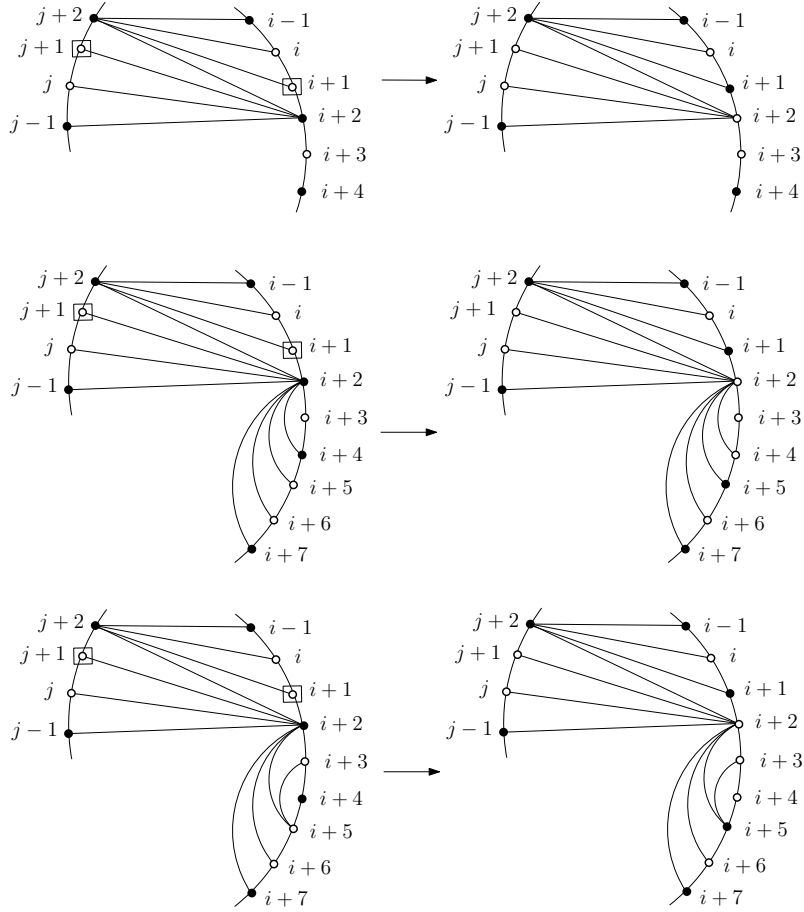}
\caption{Case (h). Squared vertices have the same coordinates. Top: Interchanging the colors of $i+1$ and $i+2$. Middle and bottom: Interchanging the colors of $i+1$ and $i+2$ and the colors of $i+4$ and $i+5$.}\label{fig:caseh}
\end{figure}

\vspace{0.5cm}

\noindent
{\bf Case (h).}
In this case, the vertices not resolved by $S$ are $i+1$ and $j+1$ in the subgraphs shown in Figure~\ref{fig:caseh}.
The analysis of Case (h) follows the same steps as Case (g), although there are small changes due to the fact that now the edge $(i+2,j+2)$ belongs to $G$ instead of the edge $(i+1,j+1)$.

If $(S\setminus \{i+2\})\cup \{i+1\}$ arranges $i+3$, it can be checked that $S'=(S\setminus \{i+2\})\cup \{i+1\}$ and $I'=[1,i+4]$ satisfy Invariant{} (see Figure~\ref{fig:caseh}, top).
If $(S\setminus \{i+2\})\cup \{i+1\}$ does not arrange $i+3$,
arguing exactly as in Case (g), we have that the vertex with the same coordinates as $i+3$ is $j'=i+5$, and one of the cases
shown in  the middle and bottom of Figure~\ref{fig:caseh} holds.
It can be checked in both cases that the sets
$S'=(S\setminus \{i+2, i+4\})\cup \{i+1,i+5\}$ and $I'=I\cup [i,i+7]$ satisfy Invariant{} (see Figure~\ref{fig:caseh}, middle and bottom).
\medskip

To finish the proof of the theorem,
let us see that $S$ can be computed in linear time. Building $S=S_0$ obviously requires linear time. Besides, for every run $r$, we have to check if subgraphs (a)--(h) appear in $G$ and, if it is the case, to update $S$ accordingly. All of this can be done in constant time. Therefore, $S$ can be computed in linear time.
\end{proof}

\section{Conclusions}\label{sec:con}

In this paper, we have studied the metric dimension problem for maximal outerplanar graphs, and we have shown that $2\le \beta (G) \le \lceil \frac{2n}{5}\rceil$ for any maximal outerplanar graph $G$. In relation to the lower bound, we have characterized all maximal outerplanar graphs with metric dimension two, based on embedding such graphs into the strong product of two paths. A first question is whether this technique can be applied to characterize graphs with metric dimension two in other families of graphs, as 2-trees or near-triangulations.

With respect to the upper bound, we have provided a linear algorithm to build a resolving set of size $\lceil \frac{2n}{5}\rceil$ for any maximal outerplanar graph. A second question is whether similar techniques as those described in the algorithm can be used to find efficiently resolving sets for other families of graphs, as Hamiltonian outerplanar graphs or near-triangulations. For near-triangulations, the conjecture is that there always exists a resolving set of size  $\lceil \frac{2n}{5}\rceil$ for any near-triangulation.

\section{Acknowledgments}

A. Garc\'\i a, M. Mora and J. Tejel are supported by H2020-MSCA-RISE project 734922 - CONNECT; M. Claverol, A. Garc\'\i a, G. Hern\'andez, C. Hernando, M. Mora and J. Tejel are supported by project MTM2015-63791-R (MINECO/FEDER);  M. Claverol is supported by project Gen. Cat. DGR 2017SGR1640; C. Hernando, M. Maureso and M. Mora are supported by project Gen. Cat. DGR 2017SGR1336; A. Garc\'\i a and J. Tejel are  supported by project Gobierno de Arag\'on E41-17R.


\end{document}